\documentclass[12pt]{amsart}
\usepackage{}
\usepackage{amsmath}
\usepackage{amsfonts}
\usepackage{amssymb}
\usepackage[all]{xy}           
\usepackage{bm}
\usepackage{bbding}
\usepackage{txfonts}
\usepackage{amscd}
\usepackage{xspace}
\usepackage[shortlabels]{enumitem}
\usepackage{ifpdf}

\ifpdf
  \usepackage[colorlinks,final,backref=page,hyperindex]{hyperref}
\else
  \usepackage[colorlinks,final,backref=page,hyperindex,hypertex]{hyperref}
\fi
\usepackage{tikz}
\usepackage[active]{srcltx}

\topmargin -.8cm \textheight 22.8cm \oddsidemargin 0cm \evensidemargin -0cm \textwidth 16.3cm

\makeatletter

\newtheorem{thm}{Theorem}[section]
\newtheorem{lem}[thm]{Lemma}
\newtheorem{cor}[thm]{Corollary}
\newtheorem{pro}[thm]{Proposition}
\newtheorem{ex}[thm]{Example}
\theoremstyle{definition}
\newtheorem{rmk}[thm]{Remark}
\newtheorem{defi}[thm]{Definition}

\newcommand{\nc}{\newcommand}
\newcommand{\delete}[1]{}

\nc{\mlabel}[1]{\label{#1}}  
\nc{\mcite}[1]{\cite{#1}}  
\nc{\mref}[1]{\ref{#1}}  
\nc{\mbibitem}[1]{\bibitem{#1}} 

\delete{
\nc{\mlabel}[1]{\label{#1}{\hfill \hspace{1cm}{\bf{{\ }\hfill(#1)}}}}
\nc{\mcite}[1]{\cite{#1}{{\bf{{\ }(#1)}}}}  
\nc{\mref}[1]{\ref{#1}{{\bf{{\ }(#1)}}}}  
\nc{\mbibitem}[1]{\bibitem[\bf #1]{#1}} 
}

\setlength{\baselineskip}{1.8\baselineskip}

\newcommand {\emptycomment}[1]{}

\delete{
\setlength{\baselineskip}{1.8\baselineskip}

\newcommand{\emptycomment}[1]{}

}

\nc{\calo}{\mathcal{O}}
\nc{\oop}{$\mathcal{O}$-operator\xspace}
\nc{\oops}{$\mathcal{O}$-operators\xspace}
\nc{\mrho}{{\bm{\varrho}}}
\nc{\bfk}{\mathbf{K}}
\nc{\invlim}{\displaystyle{\lim_{\longleftarrow}}\,}
\nc{\ot}{\otimes}
\nc{\CV}{\mathbf{C}}
\nc{\CLV}{\mathbf{CL}}

\nc{\Oprn}{\Theta}

\newcommand{\Sym}{\mathsf{S}}

\newcommand{\CE}{\mathsf{CE}}

\newcommand{\NR}{\mathsf{NR}}

\newcommand{\lon }{\,\rightarrow\,}
\newcommand{\be }{\begin{equation}}
\newcommand{\ee }{\end{equation}}

\newcommand{\g}{\mathfrak g}
\newcommand{\h}{\mathfrak h}



\newcommand{\huaV}{\mathcal{V}}
\newcommand{\huaW}{\mathcal{W}}
\newcommand{\huaX}{\mathcal{X}}


\newcommand{\huaJ}{\mathcal{J}}

\newcommand{\huaT}{\mathcal{T}}

\newcommand{\frkg}{\mathfrak g}
\newcommand{\frkh}{\mathfrak h}

\newcommand{\frkC}{\mathfrak C}

\newcommand{\frkX}{\mathfrak X}

\newcommand{\Courant}[1]{\left\llbracket  #1\right\rrbracket }


\newcommand{\Id}{{\rm{Id}}}

\newcommand{\br}[1]{   [ \cdot,    \cdot  ]   }

\newcommand{\dM}{\mathrm{d}}

\newcommand{\Hom}{\mathrm{Hom}}
\newcommand{\Der}{\mathrm{Der}}

\newcommand{\gl}{\mathfrak {gl}}

\newcommand{\ad}{\mathrm{ad}}

\newcommand{\Img}{\mathrm{Im}}

\newcommand{\sgn}{\mathrm{sgn}}

\newcommand{\B}{\mathsf{B}}

\newcommand{\LTS}{\mathsf{LTS}}

\newcommand{\Ten}{\mathsf{T}}
\newcommand{\la}{\mathfrak G}
\nc{\oprn}{\theta}

\newcommand{\TTHL}{\rm{\bf $2$TermHomLTS }}
\newcommand{\LTTS}{\rm{\bf LieTri$2$Sys }}

\begin{document}

\title{Cohomology and Homotopy of Lie triple systems}

\author{Haobo Xia}
\address{Department of Mathematics, Jilin University, Changchun 130012, Jilin, China}
\email{xiahb21@mails.jlu.edu.cn}

\author{Yunhe Sheng}
\address{Department of Mathematics, Jilin University, Changchun 130012, Jilin, China}
\email{shengyh@jlu.edu.cn}

\author{Rong Tang}
\address{Department of Mathematics, Jilin University, Changchun 130012, Jilin, China}
\email{tangrong@jlu.edu.cn}


\begin{abstract}
In this paper, first we give the controlling algebra of Lie triple systems. In particular, the cohomology of Lie triple systems can be characterized by the controlling algebra. Then using  controlling algebras,  we introduce the notions of homotopy Nambu algebras and  homotopy Lie triple systems.  We show that $2$-term homotopy Lie triple systems is equivalent to  Lie triple $2$-systems, and the latter is the categorification of a Lie triple system.  Finally we study skeletal and strict Lie triple $2$-systems. We show that skeletal   Lie triple $2$-systems can be classified the third cohomology group, and strict Lie triple $2$-systems are equivalent to crossed modules of Lie triple systems.
\end{abstract}

\keywords{Lie triple system, cohomology, Lie triple $2$-system, Nambu algebra, Leibniz algebra\\
\qquad 2020 Mathematics Subject Classification. 17A32, 17B10, 17B56, 17A42}

\maketitle

\tableofcontents

\allowdisplaybreaks


\section{Introduction}


\vspace{2mm}

Lie triple systems were originated from the research of symmetric spaces \cite{Cartan}. Jacobson firstly studied this system algebraically and named it Lie triple system \cite{J}. Lister constructed a structure theory of Lie triple systems in \cite{L}. The representation theory of Lie triple systems was given in \cite{HP}. Lie triple systems are  very important algebraic structures and have closed connection with many other algebraic structures, such as Nambu algebras \cite{DT},  Leibniz algebras, Jordan algebras \cite{BDE} and Lie algebras \cite{S}.  On the one hand, a Lie triple system is a special Nambu algebra. On the other hand, there is a Leibniz algebra structure on the space of fundamental objects.  Lie triple systems also play important roles in numerical
analysis of differential equations \cite{MQZ}.

The cohomology of an algebraic structure is very important since it can endow invariants. In particular, the second cohomology group can classify deformation and extension problems.
   Harris developed a cohomology theory of Lie triple systems using the universal enveloping algebra the {\rm Ext} functor in \cite{Harris}. On the other hand, Yamaguti constructed the representation and cohomology theory for Lie triple systems in \cite{Y}, without going out of a Lie triple system into an enveloping Lie algebra. Then Zhang explained this cohomology using the Loday-Pirashvili cohomology of the corresponding Leibniz algebra of fundamental objects in \cite{Zhang}, which makes the cohomology of Lie-triple systems well understood. Deformations of Lie triple systems were further studied in \cite{KT,YBB}.  

   In general, there is another approach to give a cohomology theory of an algebraic structure, namely using the controlling algebra. A controlling algebra of an algebraic structure is a graded Lie algebra (sometimes, $L_\infty$-algebra) whose Maurer-Cartan elements are the given algebraic structure. For example, the controlling algebra for Lie algebra structures on a vector space $V$ is given by the Nijenhuis-Richardson bracket  $[\cdot,\cdot]_\NR$ on the graded vector space $\oplus_{n=1}^{+\infty}\Hom(\wedge^{n+1}V,V)$ \cite{NR}, and the Chevalley-Eilenberg coboundary operator $\dM_\CE$ of a Lie algebra $(V,\mu)$ can be obtained by $\dM_\CE f=(-1)^{k-1}[\mu,f]_\NR$ for all $f\in\Hom(\wedge^{k}V,V)$.

   The first purpose of this paper is devoted to study the controlling algebra of Lie triple systems, and apply it to the existing cohomology theory. Observe that a Lie triple system is a special Nambu algebra, while the controlling algebra of Nambu algebras was already given in \cite{Rot}. This motivate us to solve this problem by figuring out a subalgebra of the graded Lie algebra given in \cite{Rot}. We further justify it by showing that the coboundary operator of a Lie triple system with coefficients in itself can be characterized by this controlling algebra structure.

   As soon as there is the controlling algebra of an algebraic structure, we can introduce the homotopy of the given algebraic structure via replace a vector space by a graded vector space. For example, on the graded vector space $\oplus_{n=1}^{+\infty}\Hom(\Sym( \huaV^\bullet),\huaV^\bullet)$ associated to a graded vector space $\huaV^\bullet$, there is also the Nijenhuis-Richardson bracket which makes it into a graded Lie algebra. Maurer-Cartan elements of this graded Lie algebra is exactly homotopy Lie algebra structures (also called $L_\infty$-algebras). Guided by this philosophy, we introduce the notion of homotopy Nambu algebras and homotopy Lie triple systems. We show that a homotopy Nambu algebra and a homotopy Lie triple system naturally gives rise to a $Leibniz_\infty$-algebra, which generalizes the classical result to the homotopy version. See \cite{ZLS} for the study of 3-Lie-infty-algebras.

   Then we focus on the 2-term case and enrich 2-term homotopy Lie triple systems to be a 2-category. On the other hand, it is well known that 2-term homotopy algebras are equivalent to the categorification of the algebraic structure. Thus it is natural to study the categorification of Lie triple systems. See \cite{baez:2algebras} for more details of categorification of Lie algebras. We introduce the notion of Lie triple 2-systems and show that the 2-category of 2-term homotopy Lie triple systems is equivalent to the 2-category of Lie triple 2-systems. We further classify skeletal Lie triple 2-systems using the third cohomology group of a Lie triple system, and show that strict Lie triple 2-systems are equivalent to crossed modules of Lie triple systems. Recently,   2-term $L_\infty$-triple algebras and the categorification of Lie-Yamaguti algebras was given in \cite{ZL} using a different approach.


The paper is organized as follows. In Section \ref{sec:con}, we give the controlling algebra of Lie triple systems. In Section \ref{sec:hom}, we introduce the notions of homotopy Nambu algebras and homotopy Lie triple systems. In Section \ref{sec:equ}, we introduce the notion of Lie triple 2-systems, and show that the 2-category of 2-term homotopy Lie triple systems is equivalent to the 2-category of Lie triple 2-systems. In Section \ref{sec:ske}, first we  classify skeletal Lie triple 2-systems using the third cohomology group of a Lie triple system. Then we introduce the notion of crossed modules of Lie triple systems, and show that strict Lie triple 2-systems are equivalent to crossed modules of Lie triple systems.

\vspace{2mm}
\noindent
{\bf Acknowledgements. } This research is supported by NSFC (12371029).

\section{The controlling algebra of Lie triple systems}\label{sec:con}

In this section, we give the controlling algebra of Lie triple systems, i.e. a graded Lie algebra whose Maurer-Cartan elements are Lie triple systems. The main tools we use is the controlling algebras of Leibniz algebras and Nambu algebras.

A {\bf Leibniz algebra} is a vector space $\g$ endowed with a linear map $[\cdot,\cdot]_{\g}:\g\otimes\g\lon\g$ satisfying
\begin{eqnarray}
[x,[y,z]_{\g}]_{\g}=[[x,y]_{\g},z]_{\g}+[y,[x,z]_{\g}]_{\g},\,\,\,\,\forall x,y,z\in\g.
\end{eqnarray}

\emptycomment{
A {\bf representation} of a Leibniz algebra $(\g,[\cdot,\cdot]_{\g})$ is a triple $(V;\rho^L,\rho^R)$, where $V$ is a vector space, $\rho^L,\rho^R:\g\lon\gl(V)$ are linear maps such that the following equalities hold for all $x,y\in\g$,
\begin{eqnarray}
\label{rep-1}\rho^L([x,y]_{\g})&=&[\rho^L(x),\rho^L(y)],\\
\label{rep-2}\rho^R([x,y]_{\g})&=&[\rho^L(x),\rho^R(y)],\\
\label{rep-3}\rho^R(y)\circ \rho^L(x)&=&-\rho^R(y)\circ \rho^R(x).
\end{eqnarray}
Here $[\cdot,\cdot]:\wedge^2\gl(V)\lon\gl(V)$ is the commutator Lie bracket on $\gl(V)$, the vector space of linear transformations on $V$.

\begin{defi}Let $(V;\rho^L,\rho^R)$ be a representation of a Leibniz algebra $(\g,[\cdot,\cdot]_{\g})$.
The Leibniz cohomology of $\g$ with the coefficient in $V$ is the cohomology of the cochain complex $C^k(\g,V)=
\Hom(\otimes^k\g,V),(k\ge 0)$ with the coboundary operator
$\partial:C^k(\g,V)\longrightarrow C^
{k+1}(\g,V)$
defined by
\begin{eqnarray*}
(\partial f)(x_1,\cdots,x_{k+1})&=&\sum_{i=1}^{k}(-1)^{i+1}\rho^L(x_i)f(x_1,\cdots,\hat{x_i},\cdots,x_{k+1})+(-1)^{k+1}\rho^R(x_{k+1})f(x_1,\cdots,x_{k})\\
                      \nonumber&&+\sum_{1\le i<j\le k+1}(-1)^if(x_1,\cdots,\hat{x_i},\cdots,x_{j-1},[x_i,x_j]_\g,x_{j+1},\cdots,x_{k+1}),
\end{eqnarray*}
for all $x_1,\cdots, x_{k+1}\in\g$. The resulting cohomology is denoted by $H^*(\g,V)$.
\end{defi}

A permutation $\sigma\in\mathbb S_n$ is called an {\bf $(i,n-i)$-shuffle} if $\sigma(1)<\cdots <\sigma(i)$ and $\sigma(i+1)<\cdots <\sigma(n)$. If $i=0$ or $n$, we assume $\sigma=\Id$. The set of all $(i,n-i)$-shuffles will be denoted by $\mathbb S_{(i,n-i)}$. The notion of an $(i_1,\cdots,i_k)$-shuffle and the set $\mathbb S_{(i_1,\cdots,i_k)}$ are defined analogously.
}

Let $\g$ be a vector space. Consider the graded vector space $CL^*(\g,\g)=\oplus_{n\ge 0}CL^n(\g,\g)=\oplus_{n\ge 0}\Hom(\otimes^{n+1}\g,\g)$. It is known that $CL^*(\g,\g)$ equipped with the {\bf Balavoine bracket}
\begin{eqnarray}\label{leibniz-bracket}
[P,Q]_\B=P\bar{\circ}Q-(-1)^{pq}Q\bar{\circ}P,\,\,\,\,\forall P\in CL^{p}(\g,\g),Q\in CL^{q}(\g,\g),
\end{eqnarray}
is a graded Lie algebra (\cite{Balavoine-1}), where $P\bar{\circ}Q\in CL^{p+q}(\g,\g)$ is defined by
$
P\bar{\circ}Q=\sum_{k=1}^{p+1}P\circ_k Q,
$
and $\circ_k$ is defined by
\begin{eqnarray}
\label{commutator}&&(P\circ_kQ)(x_1,\cdots,x_{p+q+1})\\
\nonumber &=&\sum_{\sigma\in\mathbb S_{(k-1,q)}}(-1)^{(k-1)q}(-1)^{\sigma}P(x_{\sigma(1)},\cdots,x_{\sigma(k-1)},Q(x_{\sigma(k)},\cdots,x_{\sigma(k+q-1)},x_{k+q}),x_{k+q+1},\cdots,x_{p+q+1}).
\end{eqnarray}
In particular, for $\Omega\in CL^{1}(\g,\g)$, we have
\begin{eqnarray*}
([\Omega,\Omega]_\B)(x_1,x_2,x_3)=2\Big(\Omega(\Omega(x_1,x_2),x_3)-\Omega(x_1,\Omega(x_2,x_3))
+\Omega(x_2,\Omega(x_1,x_3))\Big).
\end{eqnarray*}
Thus, $\Omega$ defines a Leibniz algebra structure if and only if $[\Omega,\Omega]_\B=0$.

 \begin{defi}{\rm (\cite{DT})}
A {\bf   Nambu algebra} is a vector space $\g$ endowed with a linear map $[\cdot,\cdot,\cdot]_\g:\otimes^3\g\lon\g$ satisfying
\begin{eqnarray}
{}&&[x,y,[z,w,u]_\g]_\g=[[x,y,z]_\g,w,u]_\g+[z,[x,y,w]_\g,u]_\g+[z,w,[x,y,u]_\g]_\g.
\end{eqnarray}
\end{defi}

Let $C^p(\g,\g)=\Hom(\otimes^{2p+1}\g,\g)$ and $C^*(\g,\g)=\oplus_{p\ge 0}C^{p}(\g,\g)$. Define a graded linear map $\Phi:C^p(\g,\g)\lon CL^p(\otimes^2\frkg,\otimes^2\frkg)$ by
\begin{eqnarray}\label{3-Leibniz-to-Leibniz-classical}
(\Phi f)(\frkX_1,\cdots,\frkX_p,x\otimes y):=f(\frkX_1,\cdots,\frkX_p,x)\otimes y+x\otimes  f(\frkX_1,\cdots,\frkX_p,y),
\end{eqnarray}
for all $\frkX_i=x_i\otimes y_i\in \otimes^2\g, x,y\in\g$ and $f\in C^p(\g,\g),$ which is obviously  a graded  injective linear map.
It was proved in \cite{Rot}
that $\Img\Phi$ is a graded Lie subalgebra of $(CL^*(\otimes^2\frkg,\otimes^2\frkg),[\cdot,\cdot]_\B)$.
Consequently, one can transfer the graded Lie algebra structure on $\Img\Phi$ to a graded Lie algebra structure $\Courant{\cdot,\cdot}$ on $C^*(\g,\g)$ via
\begin{eqnarray*}
\Courant{P,Q}  =\Phi^{-1}\big([\Phi(P),\Phi(Q)]_\B\big),\,\,\forall P\in C^p(\g,\g),~Q\in C^q(\g,\g).
\end{eqnarray*}
More precisely, the graded vector space $C^*(\g,\g)=\oplus_{p\ge 0}C^{p}(\g,\g)$ equipped with
the graded bracket operation $$\Courant{\cdot,\cdot}: C^p(\g,\g)\times C^q(\g,\g)\longrightarrow C^{p+q}(\g,\g)$$ defined by
$\Courant{P,Q}=P\circ Q-(-1)^{pq}Q\circ P$, where $P\circ Q\in C^{p+q}(\g,\g)$ is defined by
$
P \circ Q=\sum_{k=1}^{p+1}(-1)^{(k-1)q}P\circ_k Q,
$
and for all $\frkX_i=x_i\otimes y_i\in \otimes^2\g, x\in\g$, the operation $\circ_k,~k=1,\cdots,p,$ is defined by
\begin{eqnarray}\label{Nambu-1}
&&(P\circ_k Q)(\frkX_1,\dotsc,\frkX_{p+q},x)\\
\nonumber &=&\sum_{\sigma\in \mathbb S_{(k-1,q)}}
(-1)^{\sigma}P\left(\frkX_{\sigma(1)},\dotsc,\frkX_{\sigma(k-1)},Q(\frkX_{\sigma(k)},\dotsc,\frkX_{\sigma(k+q-1)},x_{k+q})\otimes y_{k+q},\frkX_{k+q+1},\dotsc,\frkX_{p+q},x\right)\\
\nonumber &&+\sum_{\sigma\in \mathbb S_{(k-1,q)}}
(-1)^{\sigma}P\left(\frkX_{\sigma(1)},\dotsc,\frkX_{\sigma(k-1)},x_{k+q}\otimes Q(\frkX_{\sigma(k)},\dotsc,\frkX_{\sigma(k+q-1)},y_{k+q}) ,\frkX_{k+q+1},\dotsc,\frkX_{p+q},x\right),
\end{eqnarray}
and $\circ_{p+1}$ is defined by
\begin{eqnarray}\label{Nambu-2}
&&(P\circ_{p+1} Q)(\frkX_1,\dotsc,\frkX_{p+q},x)\\
\nonumber &=&\sum_{\sigma\in \mathbb S_{(p,q)}}
(-1)^{\sigma}P\left(\frkX_{\sigma(1)},\dotsc,\frkX_{\sigma(p)},Q(\frkX_{\sigma(p+1)},\dotsc,\frkX_{\sigma(p+q-1)}, \frkX_{\sigma(p+q)},x)\right),
\end{eqnarray}
is a graded Lie algebra.

\begin{thm}\label{Nambu-lie-algebra}\cite{Rot}
Maurer-Cartan elements of the graded Lie algebra $(C^*(\g,\g),\Courant{\cdot,\cdot})$ are the Nambu  algebra structures on the vector space $\g$.
\end{thm}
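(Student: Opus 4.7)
The plan is to reduce the statement to the analogous fact for Leibniz algebras, using the injective graded Lie algebra morphism $\Phi$ recalled above. First I would observe that a Maurer-Cartan element of $(C^*(\g,\g),\Courant{\cdot,\cdot})$ lives in degree $1$, i.e.\ it is a trilinear map $\mu\in C^1(\g,\g)=\Hom(\otimes^3\g,\g)$, and ``Maurer-Cartan'' means $\Courant{\mu,\mu}=0$. Since $\Phi$ is a morphism of graded Lie algebras by construction (the bracket $\Courant{\cdot,\cdot}$ was \emph{defined} so that $\Phi[\Courant{P,Q}]=[\Phi P,\Phi Q]_\B$) and is injective, one has
\begin{equation*}
\Courant{\mu,\mu}=0\quad\Longleftrightarrow\quad [\Phi(\mu),\Phi(\mu)]_\B=0,
\end{equation*}
and the right-hand side says precisely that $\Phi(\mu)\in CL^1(\otimes^2\g,\otimes^2\g)$ is a Leibniz bracket on $\otimes^2\g$.

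Next I would translate this Leibniz identity back into a condition on $\mu$ using the explicit form
\begin{equation*}
(\Phi\mu)(x\otimes y,z\otimes w)=\mu(x,y,z)\otimes w+z\otimes\mu(x,y,w).
\end{equation*}
Plugging this into the Leibniz identity
\begin{equation*}
(\Phi\mu)(\frkX,(\Phi\mu)(\frkY,\frkZ))=(\Phi\mu)((\Phi\mu)(\frkX,\frkY),\frkZ)+(\Phi\mu)(\frkY,(\Phi\mu)(\frkX,\frkZ))
\end{equation*}
with $\frkX=x\otimes y$, $\frkY=z\otimes w$, $\frkZ=u\otimes v$, and expanding the two slots of each tensor factor, I expect every term to split as a sum of tensors in which exactly one slot carries a nested $\mu$-expression. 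Matching the terms acting on the first tensor factor (all those whose second factor is the bare $v$) yields exactly
\begin{equation*}
\mu(x,y,\mu(z,w,u))=\mu(\mu(x,y,z),w,u)+\mu(z,\mu(x,y,w),u)+\mu(z,w,\mu(x,y,u)),
\end{equation*}
which is the fundamental identity of a Nambu algebra; the terms acting on the second factor give the same identity with $u$ replaced by $v$ and therefore add no new constraints.

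The main (and only real) obstacle is the bookkeeping in this last step: verifying that after the expansion of $(\Phi\mu)\bar{\circ}(\Phi\mu)$ via the $\circ_k$ operations of \eqref{Nambu-1}--\eqref{Nambu-2}, no spurious cross-terms survive and that the two ``slots'' decouple cleanly into two copies of the fundamental identity. This is a direct consequence of the derivation-like shape of $\Phi(\mu)$, but it needs to be checked term by term. Once this is verified, the chain of equivalences
\begin{equation*}
\mu\text{ is a Nambu bracket}\ \Longleftrightarrow\ \Phi(\mu)\text{ is a Leibniz bracket}\ \Longleftrightarrow\ [\Phi(\mu),\Phi(\mu)]_\B=0\ \Longleftrightarrow\ \Courant{\mu,\mu}=0
\end{equation*}
gives the theorem.
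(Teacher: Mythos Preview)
Your proposal is correct. The route differs from the paper's only in packaging: the paper simply plugs $\pi\in C^1(\g,\g)$ into the explicit formulas \eqref{Nambu-1}--\eqref{Nambu-2} for $\pi\circ\pi$, obtaining
\[
\Courant{\pi,\pi}(\frkX_1,\frkX_2,x)=2\big(\pi(\pi(\frkX_1,x_2),y_2,x)+\pi(x_2,\pi(\frkX_1,y_2),x)-\pi(\frkX_1,\pi(\frkX_2,x))+\pi(\frkX_2,\pi(\frkX_1,x))\big),
\]
which is visibly twice the Nambu defect, and stops there. You instead pass through the embedding $\Phi$ into $CL^*(\otimes^2\g,\otimes^2\g)$, check the Leibniz identity for $\Phi(\mu)$, and read off the Nambu identity from the tensor components. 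The two computations are essentially the same---since $\Courant{\cdot,\cdot}$ was \emph{defined} by pulling back the Balavoine bracket through $\Phi$, expanding $[\Phi(\mu),\Phi(\mu)]_\B$ and expanding $\Courant{\mu,\mu}$ are the same calculation viewed on either side of $\Phi$. Your version has the conceptual benefit of tying the result to the classical Leibniz case and of anticipating Corollary~\ref{Lie-homo}; the paper's version is shorter because it avoids the tensor-slot bookkeeping. One small point: your ``decoupling'' step (showing that $N\otimes v+u\otimes N=0$ for all $u,v$ forces $N=0$) is exactly the injectivity of $\Phi$ in degree $2$, which you already invoked at the start, so you could cite that rather than argue it by hand.
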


\begin{proof}
For $\pi\in C^1(\g,\g),~\frkX_1,\frkX_{2}\in\g\otimes\g,~x\in\g,$ we have
\begin{eqnarray*}
\Courant{\pi,\pi}(\frkX_1,\frkX_{2},x)&=&2(\pi\circ\pi)(\frkX_1,\frkX_{2},x)\\
                                      &=&2\Big(\pi\big(\pi(x_1,y_1,x_2),y_2,x\big)+\pi\big(x_2,\pi(x_1,y_1,y_2),x\big)\Big)\\
                                      &&+2\Big(-\pi\big(x_1,y_1,\pi(x_2,y_2,x)\big)+\pi\big(x_2,y_2,\pi(x_1,y_1,x)\big)\Big),
\end{eqnarray*}
which indicates that $\pi$ is a Nambu algebra structure on the vector space $\g$ if and only if $\Courant{\pi,\pi}=0$. The proof is finished.
\end{proof}

\begin{cor}\label{Lie-homo}
With the above notations,  $\Phi$ is   homomorphism from the graded Lie algebra  $(C^*(\g,\g),\Courant{\cdot,\cdot})$ to the graded Lie algebra $(CL^*(\otimes^2\frkg,\otimes^2\frkg),[\cdot,\cdot]_\B)$. Consequently, there is a Leibniz algebra structure on $\g\otimes \g$ associated to a Nambu algebra $(\g,[\cdot,\cdot,\cdot]_\g)$, which is defined by
\begin{eqnarray}\label{Nam}
[x\otimes y,z\otimes w]:=[x,y,z]_\g\otimes w+z\otimes [x,y,w]_\g,\,\,\,\forall x,y,z,w\in\g.
\end{eqnarray}
\end{cor}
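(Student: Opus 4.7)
The first assertion is essentially tautological given the construction that immediately precedes the statement. The bracket $\Courant{\cdot,\cdot}$ on $C^*(\g,\g)$ was \emph{defined} by the formula $\Courant{P,Q}=\Phi^{-1}\bigl([\Phi(P),\Phi(Q)]_\B\bigr)$, which is legitimate precisely because $\Img\Phi$ is a graded Lie subalgebra of $(CL^*(\otimes^2\g,\otimes^2\g),[\cdot,\cdot]_\B)$ (a fact from \cite{Rot} that was cited, and which is the only non-trivial ingredient). By construction $\Phi$ is then a graded Lie algebra isomorphism onto its image, hence a homomorphism into $CL^*(\otimes^2\g,\otimes^2\g)$. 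So the first claim needs no further argument beyond a pointer to the preceding paragraph.

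For the second assertion, the plan is to exploit Theorem \ref{Nambu-lie-algebra} together with the homomorphism property just established. Let $\pi\in C^1(\g,\g)$ denote the Nambu algebra structure $[\cdot,\cdot,\cdot]_\g$. By Theorem \ref{Nambu-lie-algebra}, $\pi$ is a Maurer--Cartan element, i.e. $\Courant{\pi,\pi}=0$. Applying $\Phi$ and using that $\Phi$ is a homomorphism of graded Lie algebras yields
\begin{eqnarray*}
[\Phi(\pi),\Phi(\pi)]_\B=\Phi(\Courant{\pi,\pi})=0,
\end{eqnarray*}
so $\Phi(\pi)\in CL^1(\g\otimes\g,\g\otimes\g)$ is a Maurer--Cartan element of the Balavoine graded Lie algebra on $\g\otimes\g$. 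By the characterization of Leibniz algebra structures in terms of the Balavoine bracket recalled in Section \ref{sec:con}, this says exactly that $\Phi(\pi)$ endows $\g\otimes\g$ with a Leibniz algebra structure.

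Finally, it remains only to identify this Leibniz bracket with the formula \eqref{Nam}. This is an immediate substitution into the definition \eqref{3-Leibniz-to-Leibniz-classical} of $\Phi$: for $p=1$ we have
\begin{eqnarray*}
(\Phi\pi)(x\otimes y,\,z\otimes w)=\pi(x,y,z)\otimes w+z\otimes \pi(x,y,w)=[x,y,z]_\g\otimes w+z\otimes [x,y,w]_\g,
\end{eqnarray*}
which matches \eqref{Nam} exactly. There is no real obstacle here; the whole of the non-trivial work has been absorbed into the (previously stated) result of \cite{Rot} that $\Img\Phi$ is closed under the Balavoine bracket, and the Corollary is a formal consequence.
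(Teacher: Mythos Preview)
Your proposal is correct and matches the paper's intended approach: the paper states this corollary without proof, treating it as an immediate consequence of the construction of $\Courant{\cdot,\cdot}$ via $\Phi$ and of Theorem~\ref{Nambu-lie-algebra}. Your argument spells out exactly this tautological reasoning (the homomorphism property is built into the definition $\Courant{P,Q}=\Phi^{-1}([\Phi(P),\Phi(Q)]_\B)$, and the Leibniz structure is the push-forward of the Maurer--Cartan element), together with the straightforward identification of $\Phi(\pi)$ with \eqref{Nam}.
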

 See \cite{DT,Gautheron} for more details for the relation between Leibniz algebras and Nambu algebras and their applications.

 \begin{defi}\cite{J}
A {\bf Lie triple system} is a vector space $\g$ endowed with a linear map $[\cdot,\cdot,\cdot]_\g:\g\otimes\g\otimes\g\lon\g$ satisfying
\begin{eqnarray}
{\label{lts1}}&&[x,x,y]_\g=0,\\
{\label{lts2}}&&[x,y,z]_\g+[y,z,x]_\g+[z,x,y]_\g=0,\\
{\label{lts3}}&&[x,y,[z,w,u]_\g]_\g=[[x,y,z]_\g,w,u]_\g+[z,[x,y,w]_\g,u]_\g+[z,w,[x,y,u]_\g]_\g.
\end{eqnarray}
\end{defi}

We note that Lie triple systems are specific classes of Nambu algebras. In order to characterise the Lie triple system structures on the vector space $\g$, we should choose a graded Lie subalgebra of $(C^*(\g,\g),\Courant{\cdot,\cdot})$. For this purpose, consider the graded subspace $\frkC_{\LTS}^*(\g,\g)=\oplus_{p\ge 0}\frkC_{\LTS}^p(\g,\g)$ of $C^*(\g,\g)$, where the space $\frkC_{\LTS}^p(\g,\g)$  is the vector space of linear maps that satisfies the following conditions:
\begin{eqnarray}
\label{condition-1}&&Q(\frkX_1,\cdots,\frkX_{p-1},x,x,y)=0,\\
\label{condition-2}&&Q(\frkX_1,\cdots,\frkX_{p-1},x,y,z)+Q(\frkX_1,\cdots,\frkX_{p-1},y,z,x)+Q(\frkX_1,\cdots,\frkX_{p-1},z,x,y)=0.
\end{eqnarray}

\begin{thm}
With above notations, $(\frkC_{\LTS}^*(\g,\g),\Courant{\cdot,\cdot})$ is a graded Lie subalgebra of the graded Lie algebra $(C^*(\g,\g),\Courant{\cdot,\cdot})$. Moreover, its Maurer-Cartan elements are the Lie triple system structures on the vector space $\g$.
\end{thm}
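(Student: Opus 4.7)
The plan is to establish two assertions: (i) $\frkC_{\LTS}^*(\g,\g)$ is closed under the bracket $\Courant{\cdot,\cdot}$ inherited from $(C^*(\g,\g),\Courant{\cdot,\cdot})$, and (ii) the Maurer-Cartan elements of the resulting graded Lie subalgebra are exactly the Lie triple system structures on $\g$.

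Assertion (ii) is short once (i) is in hand. At degree $p=1$, conditions \eqref{condition-1} and \eqref{condition-2} read $\pi(x,x,y)=0$ and $\pi(x,y,z)+\pi(y,z,x)+\pi(z,x,y)=0$, which are exactly the first two axioms \eqref{lts1} and \eqref{lts2} of a Lie triple system. Since the bracket on the subspace is the restriction of $\Courant{\cdot,\cdot}$, Theorem \ref{Nambu-lie-algebra} then gives $\Courant{\pi,\pi}=0$ if and only if $\pi$ satisfies the Nambu identity \eqref{lts3}. Together these three equations recover the definition of a Lie triple system, proving (ii).

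For (i), I would fix $P\in\frkC_{\LTS}^p(\g,\g)$ and $Q\in\frkC_{\LTS}^q(\g,\g)$ and verify \eqref{condition-1} and \eqref{condition-2} for $\Courant{P,Q}=P\circ Q-(-1)^{pq}Q\circ P$, splitting the decomposition $P\circ Q=\sum_{k=1}^{p+1}(-1)^{(k-1)q}P\circ_k Q$ into three ranges. For $1\le k\le p-1$, the tensor $\frkX_{p+q}$ and the final argument $x$ sit untouched in the last three $\g$-slots of the outer $P$, so conditions \eqref{condition-1} and \eqref{condition-2} transfer termwise from $P$. For $k=p$, condition \eqref{condition-1} on $P\circ_p Q$ follows by pairing the two halves of formula \eqref{Nambu-1} using the polarized antisymmetry $P(\cdots,u,v,w)=-P(\cdots,v,u,w)$ derived from \eqref{condition-1} on $P$; condition \eqref{condition-2} is more delicate and requires assembling terms across the sum. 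In the remaining case $k=p+1$, the final slot is absorbed into $Q$ via \eqref{Nambu-2}, so the last three $\g$-slots of the outer $P$ are $(x_{\sigma(p)},y_{\sigma(p)},Q(\frkX_{\sigma(p+1)},\dots,\frkX_{\sigma(p+q)},x))$; here the Lie triple symmetries of both $P$ and $Q$ must be used, together with a regrouping of the $(p,q)$-shuffles, and the contributions pair with those from $Q\circ P$ to yield the needed cancellations.

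The main obstacle is the combinatorial bookkeeping for \eqref{condition-2} in the cases $k=p$ and $k=p+1$. There the cancellation is not termwise but happens only after assembling the contributions from $P\circ_p Q$, $P\circ_{p+1} Q$, and the corresponding terms of $Q\circ P$, invoking both the cyclic and the antisymmetry conditions on $P$ and $Q$ simultaneously. Executing this accounting while tracking shuffle signs is where the core technical effort lies.
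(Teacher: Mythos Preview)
Your handling of part (ii) and of the range $1\le k\le p-1$ matches the paper. The misconception is in how you expect the remaining cancellations to occur. You propose to treat $P\circ_p Q$ and $P\circ_{p+1} Q$ separately, getting \eqref{condition-1} for $k=p$ from antisymmetry alone and then pairing the $k=p+1$ contribution against terms of $Q\circ P$. That pairing does not work: in $P\circ_{p+1}Q$ the outer map is $P$, while in every piece of $Q\circ P$ the outer map is $Q$, so there is no termwise matching between them. Following that route you will not find the cancellations you need.

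The mechanism the paper uses (and which you should adopt) is to combine the last two pieces of the \emph{same} pre-Lie product,
\[
(-1)^{(p-1)q}P\circ_p Q+(-1)^{pq}P\circ_{p+1}Q,
\]
and verify \eqref{condition-1} and \eqref{condition-2} for this sum directly. For \eqref{condition-1}, after splitting the $(p,q)$-shuffle in \eqref{Nambu-2} according to whether the index $p+q$ lands in the first or second block, you obtain four families of terms: two of them cancel pairwise by the polarized antisymmetry of $P$ in its last three $\g$-slots, one vanishes because $Q(\ldots,x,x,y)=0$, and one vanishes because $P(\ldots,x,x,\cdot)=0$. For \eqref{condition-2}, the cyclic sum produces twelve families; three of them add to zero by \eqref{condition-2} for $Q$, and the remaining nine split into three cyclic triples, each vanishing by \eqref{condition-2} for $P$ (with the last three $\g$-slots being a cyclic rotation of $Q(\ldots,x),y,z$, etc.). In particular one proves the stronger fact that $P\circ Q\in\frkC_{\LTS}^{p+q}(\g,\g)$, so closure under $\Courant{\cdot,\cdot}$ follows without ever invoking $Q\circ P$.
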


\begin{proof}
 By \eqref{Nambu-1}, for any $P\in \frkC_{\LTS}^p(\g,\g),~Q\in \frkC_{\LTS}^q(\g,\g)$, we deduce that  $P\circ_k Q\in \frkC_{\LTS}^{p+q}(\g,\g),~ k=1,\cdots,p-1$.
 Moreover, we have
 \begin{eqnarray*}
&&\Big((-1)^{(p-1)q}P\circ_p Q+(-1)^{pq}P\circ_{p+1} Q\Big)(\frkX_1,\cdots,\frkX_{p+q-1},x,x,y)\\
&\stackrel{\eqref{Nambu-2}}{=}&(-1)^{(p-1)q}\sum_{\sigma\in \mathbb S_{(p-1,q)}}
(-1)^{\sigma}P\left(\frkX_{\sigma(1)},\dotsc,\frkX_{\sigma(p-1)},Q(\frkX_{\sigma(p)},\dotsc,\frkX_{\sigma(p+q-1)},x),x,y\right)\\
 &&+(-1)^{(p-1)q}\sum_{\sigma\in \mathbb S_{(p-1,q)}}
(-1)^{\sigma}P\left(\frkX_{\sigma(1)},\dotsc,\frkX_{\sigma(p-1)},x,Q(\frkX_{\sigma(p)},\dotsc,\frkX_{\sigma(p+q-1)},x) ,y\right)\\
&&+(-1)^{pq}\sum_{\sigma\in \mathbb S_{(p,q-1)}}
(-1)^{\sigma}P\left(\frkX_{\sigma(1)},\dotsc,\frkX_{\sigma(p)},Q(\frkX_{\sigma(p+1)},\dotsc,\frkX_{\sigma(p+q-1)}, x,x,y)\right)\\
&&+(-1)^{pq+q}\sum_{\sigma\in \mathbb S_{(p-1,q)}}
(-1)^{\sigma}P\left(\frkX_{\sigma(1)},\dotsc,\frkX_{\sigma(p-1)},x,x,Q(\frkX_{\sigma(p)},\dotsc,\frkX_{\sigma(p+q-1)},y)\right)\\
&\stackrel{\eqref{condition-2}}{=}&0.
\end{eqnarray*}
On the other hand, we have
\begin{eqnarray*}
&&\Big((-1)^{(p-1)q}P\circ_p Q+(-1)^{pq}P\circ_{p+1} Q\Big)(\frkX_1,\cdots,\frkX_{p+q-1},x,y,z)\\
&&+\Big((-1)^{(p-1)q}P\circ_p Q+(-1)^{pq}P\circ_{p+1} Q\Big)(\frkX_1,\cdots,\frkX_{p+q-1},y,z,x)\\
&&+\Big((-1)^{(p-1)q}P\circ_p Q+(-1)^{pq}P\circ_{p+1} Q\Big)(\frkX_1,\cdots,\frkX_{p+q-1},z,x,y)\\
&\stackrel{\eqref{Nambu-2}}{=}&\underbrace{(-1)^{(p-1)q}\sum_{\sigma\in \mathbb S_{(p-1,q)}}
(-1)^{\sigma}P\left(\frkX_{\sigma(1)},\dotsc,\frkX_{\sigma(p-1)},Q(\frkX_{\sigma(p)},\dotsc,\frkX_{\sigma(p+q-1)},x),y,z\right)}\\
 &&+(-1)^{(p-1)q}\sum_{\sigma\in \mathbb S_{(p-1,q)}}
(-1)^{\sigma}P\left(\frkX_{\sigma(1)},\dotsc,\frkX_{\sigma(p-1)},x,Q(\frkX_{\sigma(p)},\dotsc,\frkX_{\sigma(p+q-1)},y) ,z\right)\\
&&+\underline{(-1)^{pq}\sum_{\sigma\in \mathbb S_{(p,q-1)}}
(-1)^{\sigma}P\left(\frkX_{\sigma(1)},\dotsc,\frkX_{\sigma(p)},Q(\frkX_{\sigma(p+1)},\dotsc,\frkX_{\sigma(p+q-1)}, x,y,z)\right)}\\
&&+(-1)^{pq+q}\sum_{\sigma\in \mathbb S_{(p-1,q)}}
(-1)^{\sigma}P\left(\frkX_{\sigma(1)},\dotsc,\frkX_{\sigma(p-1)},x,y,Q(\frkX_{\sigma(p)},\dotsc,\frkX_{\sigma(p+q-1)},z)\right)\\
&&+(-1)^{(p-1)q}\sum_{\sigma\in \mathbb S_{(p-1,q)}}
(-1)^{\sigma}P\left(\frkX_{\sigma(1)},\dotsc,\frkX_{\sigma(p-1)},Q(\frkX_{\sigma(p)},\dotsc,\frkX_{\sigma(p+q-1)},y),z,x\right)\\
 &&+(-1)^{(p-1)q}\sum_{\sigma\in \mathbb S_{(p-1,q)}}
(-1)^{\sigma}P\left(\frkX_{\sigma(1)},\dotsc,\frkX_{\sigma(p-1)},y,Q(\frkX_{\sigma(p)},\dotsc,\frkX_{\sigma(p+q-1)},z),x\right)\\
&&+\underline{(-1)^{pq}\sum_{\sigma\in \mathbb S_{(p,q-1)}}
(-1)^{\sigma}P\left(\frkX_{\sigma(1)},\dotsc,\frkX_{\sigma(p)},Q(\frkX_{\sigma(p+1)},\dotsc,\frkX_{\sigma(p+q-1)},y,z,x)\right)}\\
&&+\underbrace{(-1)^{pq+q}\sum_{\sigma\in \mathbb S_{(p-1,q)}}
(-1)^{\sigma}P\left(\frkX_{\sigma(1)},\dotsc,\frkX_{\sigma(p-1)},y,z,Q(\frkX_{\sigma(p)},\dotsc,\frkX_{\sigma(p+q-1)},x)\right)}\\
&&+(-1)^{(p-1)q}\sum_{\sigma\in \mathbb S_{(p-1,q)}}
(-1)^{\sigma}P\left(\frkX_{\sigma(1)},\dotsc,\frkX_{\sigma(p-1)},Q(\frkX_{\sigma(p)},\dotsc,\frkX_{\sigma(p+q-1)},z),x,y\right)\\
&&+\underbrace{(-1)^{(p-1)q}\sum_{\sigma\in \mathbb S_{(p-1,q)}}
(-1)^{\sigma}P\left(\frkX_{\sigma(1)},\dotsc,\frkX_{\sigma(p-1)},z,Q(\frkX_{\sigma(p)},\dotsc,\frkX_{\sigma(p+q-1)},x),y\right)}\\
&&+\underline{(-1)^{pq}\sum_{\sigma\in \mathbb S_{(p,q-1)}}
(-1)^{\sigma}P\left(\frkX_{\sigma(1)},\dotsc,\frkX_{\sigma(p)},Q(\frkX_{\sigma(p+1)},\dotsc,\frkX_{\sigma(p+q-1)},z,x,y)\right)}\\
&&+(-1)^{pq+q}\sum_{\sigma\in \mathbb S_{(p-1,q)}}
(-1)^{\sigma}P\left(\frkX_{\sigma(1)},\dotsc,\frkX_{\sigma(p-1)},z,x,Q(\frkX_{\sigma(p)},\dotsc,\frkX_{\sigma(p+q-1)},y)\right)\\
&\stackrel{\eqref{condition-1}}{=}&0.
\end{eqnarray*}
Thus, we deduce that $P\circ Q\in \frkC_{\LTS}^{p+q}(\g,\g)$. Therefore, $(\frkC_{\LTS}^*(\g,\g),\Courant{\cdot,\cdot})$ is a graded Lie subalgebra of $(C^*(\g,\g),\Courant{\cdot,\cdot})$.

For $\pi\in \frkC_{\LTS}^1(\g,\g),x,y,z\in\g,$ we have
\begin{eqnarray*}
\pi(x,x,y)=0,\quad \pi(x,y,z)+\pi(y,z,x)+\pi(z,x,y)=0.
\end{eqnarray*}
Moreover, by Theorem \ref{Nambu-lie-algebra}, we deduce that $\pi$ is a Lie triple system on the vector space $\g$ if and only if $\Courant{\pi,\pi}=0$. The proof is finished.
\emptycomment{
\begin{eqnarray*}
&&(P\circ Q)(\frkX_1,\dotsc,\frkX_{p+q-1},x,x,y)\\
&=& \sum_{k=1}^{p-1}(-1)^{(k-1)q}\sum_{\sigma\in \mathbb S_{(k-1,q)}}
(-1)^{\sigma}P\left(\frkX_{\sigma(1)},\dotsc,\frkX_{\sigma(k-1)},Q(\frkX_{\sigma(k)},\dotsc,\frkX_{\sigma(k+q-1)},x_{k+q})\otimes y_{k+q},\frkX_{k+q+1},\dotsc,\frkX_{p+q-1},x,x,y\right)\\
&&+ (-1)^{(p-1)q}\sum_{\sigma\in \mathbb S_{(p-1,q)}}
(-1)^{\sigma}P\left(\frkX_{\sigma(1)},\dotsc,\frkX_{\sigma(p-1)},Q(\frkX_{\sigma(p)},\dotsc,\frkX_{\sigma(p+q-1)},x)\otimes x,y\right)\\
&&+ \sum_{k=1}^{p-1}(-1)^{(k-1)q}\sum_{\sigma\in \mathbb S_{(k-1,q)}}
(-1)^{\sigma}P\left(\frkX_{\sigma(1)},\dotsc,\frkX_{\sigma(k-1)},x_{k+q}\otimes Q(\frkX_{\sigma(k)},\dotsc,\frkX_{\sigma(k+q-1)},y_{k+q}) ,\frkX_{k+q+1},\dotsc,\frkX_{p+q-1},x,x,y\right)\\
&&+ (-1)^{(p-1)q}\sum_{\sigma\in \mathbb S_{(p-1,q)}}
(-1)^{\sigma}P\left(\frkX_{\sigma(1)},\dotsc,\frkX_{\sigma(p-1)},x\otimes Q(\frkX_{\sigma(p)},\dotsc,\frkX_{\sigma(p+q-1)},x),y\right)\\
&&+ \sum_{\sigma\in \mathbb S_{(p,q-1)}}
(-1)^{p(q-1)}(-1)^{\sigma}P\left(\frkX_{\sigma(1)},\dotsc,\frkX_{\sigma(p)},Q(\frkX_{\sigma(p+1)},\dotsc,\frkX_{\sigma(p+q-1)}, x,x,y)\right)\\
&&+ \sum_{\sigma\in \mathbb S_{(p-1,q)}}
(-1)^{(p-1)q}(-1)^{\sigma}P\left(\frkX_{\sigma(1)},\dotsc,\frkX_{\sigma(p-1)},x,x,Q(\frkX_{\sigma(p)},\dotsc,\frkX_{\sigma(p+q-1)},y)\right)\\
&=&0.
\end{eqnarray*}
And
\begin{eqnarray*}
&&\Big(\Psi(P)\circ \Psi(Q)\Big)(\frkX_1,\dotsc,\frkX_{p+q-1},x,y,z)+c.p.(x,y,z)\\
&=& \sum_{k=1}^{p-1}(-1)^{(k-1)q}\sum_{\sigma\in \mathbb S_{(k-1,q)}}
(-1)^{\sigma}P\left(\frkX_{\sigma(1)},\dotsc,\frkX_{\sigma(k-1)},Q(\frkX_{\sigma(k)},\dotsc,\frkX_{\sigma(k+q-1)},x_{k+q})\otimes y_{k+q},\frkX_{k+q+1},\dotsc,\frkX_{p+q-1},x,y,z\right)\\
&&+ (-1)^{(p-1)q}\sum_{\sigma\in \mathbb S_{(p-1,q)}}
(-1)^{\sigma}P\left(\frkX_{\sigma(1)},\dotsc,\frkX_{\sigma(p-1)},Q(\frkX_{\sigma(p)},\dotsc,\frkX_{\sigma(p+q-1)},x)\otimes y,z\right)\\
&&+ \sum_{k=1}^{p-1}(-1)^{(k-1)q}\sum_{\sigma\in \mathbb S_{(k-1,q)}}
(-1)^{\sigma}P\left(\frkX_{\sigma(1)},\dotsc,\frkX_{\sigma(k-1)},x_{k+q}\otimes Q(\frkX_{\sigma(k)},\dotsc,\frkX_{\sigma(k+q-1)},y_{k+q}) ,\frkX_{k+q+1},\dotsc,\frkX_{p+q-1},x,y,z\right)\\
&&+ (-1)^{(p-1)q}\sum_{\sigma\in \mathbb S_{(p-1,q)}}
(-1)^{\sigma}P\left(\frkX_{\sigma(1)},\dotsc,\frkX_{\sigma(p-1)},x\otimes Q(\frkX_{\sigma(p)},\dotsc,\frkX_{\sigma(p+q-1)},y),z\right)\\
&&+ \sum_{\sigma\in \mathbb S_{(p,q-1)}}
(-1)^{p(q-1)}(-1)^{\sigma}P\left(\frkX_{\sigma(1)},\dotsc,\frkX_{\sigma(p)},Q(\frkX_{\sigma(p+1)},\dotsc,\frkX_{\sigma(p+q-1)}, x,y,z)\right)\\
&&+ \sum_{\sigma\in \mathbb S_{(p-1,q)}}
(-1)^{(p-1)q}(-1)^{\sigma}P\left(\frkX_{\sigma(1)},\dotsc,\frkX_{\sigma(p-1)},x,y,Q(\frkX_{\sigma(p)},\dotsc,\frkX_{\sigma(p+q-1)},z)\right)+c.p.(x,y,z)\\
&=&0.
\end{eqnarray*}
Thus, we obtain that
\begin{eqnarray*}
&&\Big(\Psi(P)\circ \Psi(Q)-(-1)^{pq}\Psi(Q)\circ \Psi(P)\Big)(\frkX_1,\cdots,\frkX_{p+q-1},x,x,y)\\
&=&\Big(\Psi(P)\circ \Psi(Q)\Big)(\frkX_1,\cdots,\frkX_{p+q-1},x,x,y)-(-1)^{pq}\Big(\Psi(Q)\circ \Psi(P)\Big)(\frkX_1,\cdots,\frkX_{p+q-1},x,x,y)\\
&=&0.
\end{eqnarray*}
And
\begin{eqnarray*}
&&\Big(\Psi(P)\circ \Psi(Q)-(-1)^{pq}\Psi(Q)\circ \Psi(P)\Big)(\frkX_1,\cdots,\frkX_{p+q-1},x,y,z)+c.p.(x,y,z)\\
&=&\Big(\Psi(P)\circ \Psi(Q)\Big)(\frkX_1,\cdots,\frkX_{p+q-1},x,y,z)+c.p.(x,y,z)\\
&&-(-1)^{pq}\Big(\Psi(Q)\circ \Psi(P)\Big)(\frkX_1,\cdots,\frkX_{p+q-1},x,y,z)+c.p.(x,y,z)\\
&=&0.
\end{eqnarray*}
Therefore, we deduce that $\Courant{P,Q} \in\Img\Psi$, $\Img\Psi$ is a graded Lie subalgebra of $(C^*(\g,\g),\Courant{\cdot,\cdot})$.
Moreover, for $\pi\in \frkC_{\LTS}^1(\g,\g),~\frkX_1,\frkX_{2}\in\g\otimes\g,~x\in\g,$ we have
\begin{eqnarray*}
\Courant{\pi,\pi}(\frkX_1,\frkX_{2},x)&=&2(\pi\circ\pi)(\frkX_1,\frkX_{2},x)\\
                                      &=&2\Big(\pi\big(\pi(x_1,y_1,x_2),y_2,x\big)+\pi\big(x_2,\pi(x_1,y_1,y_2),x\big)\Big)\\
                                      &&+2\Big(-\pi\big(x_1,y_1,\pi(x_2,y_2,x)\big)+\pi\big(x_2,y_2,\pi(x_1,y_1,x)\big)\Big),
\end{eqnarray*}
which is indicated that $\pi$ satisfies \eqref{lts3}, with $\pi\in \frkC_{\LTS}^1(\g,\g)$ must satisfies \eqref{lts1} and \eqref{lts2}, we deduce that $\pi$ is a Lie triple system on the vector space $\g$ if and only if $\Courant{\pi,\pi}=0$. The proof is finished.}
\end{proof}
At the end of this section, we recall the cohomology theory of Lie triple systems, and show that it can be recovered from the above controlling algebra.

\begin{defi}\cite{Y}
A {\bf representation} of a Lie triple system $(\frkg,[\cdot,\cdot,\cdot]_\g)$ on a vector space $V$ is a pair $(V;\rho)$, where $\rho:\otimes^2\frkg\longrightarrow\gl(V)$ is a linear map satisfying
\begin{eqnarray*}
\label{rep-1}&&[\rho(x_2,x_1)-\rho(x_1,x_2),\rho(y_1,y_2)]=\rho([x_1,x_2,y_1]_\g,y_2)+\rho(y_1,[x_1,x_2,y_2]_\g),\\
\label{rep-2}&&\rho(x_1,[y_1, y_2, y_3]_\g)=\rho(y_2, y_3)\rho(x_1,y_1) - \rho(y_1, y_3)\rho(x_1,y_2) + \big(\rho(y_2,y_1)-\rho(y_1,y_2)\big)\rho(x_1,y_3).
\end{eqnarray*}
\end{defi}

\begin{ex}
Let $(\g,[\cdot,\cdot,\cdot]_\g)$ be a Lie triple system. The linear map $\ad:\otimes^{2}\g\longrightarrow\gl(\g)$ defined by
$$
\ad_{x,y}z=[z,x,y]_\g,\quad \forall x,y,z\in\g
$$
is a representation  of the Lie triple system $(\frkg,[\cdot,\cdot,\cdot]_\g)$, which we call the {\bf adjoint representation}.
\end{ex}

An $n$-cochain on a Lie triple system $(\frkg,[\cdot,\cdot,\cdot]_\g)$ with the coefficient in a representation $(V;\rho)$ is a linear map
$f:\otimes^{2n-1}\g  \longrightarrow V$ satisfying
\begin{eqnarray}
\label{cohomo-1}&&f(\frkX_1,\cdots,\frkX_{n-2},x,x,y)=0,\\
\label{cohomo-2}&&f(\frkX_1,\cdots,\frkX_{n-2},x,y,z)+f(\frkX_1,\cdots,\frkX_{n-2},y,z,x)+f(\frkX_1,\cdots,\frkX_{n-2},z,x,y)=0,
\end{eqnarray}
for all $\frkX_i\in\g\otimes\g,~x,y,z\in\g.$

For all $n\ge 1,$ denote the space of $n$-cochains by $C^{n}_{\LTS}(\g;V).$ The coboundary operator $\delta:C^{n}_{\LTS}(\g;V)\longrightarrow C^{n+1}_{\LTS}(\g;V)$ is given by
\begin{equation}\label{eq:drho}
\begin{split}
&(\delta f)(\frkX_1,\dotsc ,\frkX_n,z)\\
={}& \sum_{1\leq j<k\leq n}(-1)^jf(\frkX_1,\dotsc ,\hat{\frkX}_j,\dotsc ,\frkX_{k-1},[\frkX_j,\frkX_k],\frkX_{k+1},\dotsc ,\frkX_{n},z)\\
&+ \sum_{j=1}^n(-1)^jf(\frkX_1,\dotsc ,\hat{\frkX}_j,\dotsc ,\frkX_{n},[\frkX_j,z]_\g)\\
&+ \sum_{j=1}^n(-1)^{j+1}\big(\rho(y_j,x_j)-\rho(x_j,y_j)\big)f(\frkX_1,\dotsc ,\hat{\frkX}_j,\dotsc ,\frkX_{n},z)\\
&+ (-1)^{n+1}\big(\rho(y_{n},z)f(\frkX_1,\dotsc ,\frkX_{n-1},x_{n} ) -\rho(x_{n},z)f(\frkX_1,\dotsc ,\frkX_{n-1},y_{n} ) \big),
\end{split}
\end{equation}
for all $\frkX_i=x_i\otimes y_i\in\otimes^2\g,~i=1,2\dotsc,n$ and $z\in\g$. $[\frkX_j,\frkX_k]$ is defined by \eqref{Nam}. It is proved in \cite{Y} that $\delta\circ\delta=0$.
\emptycomment{
A $p$-cochain on $\frkg$ with the coefficients in a representation $(V, \theta)$ is a linear map $\omega\in \frkC_{\LTS}^{p-1}(\g,V)$. And the coboundary operator $\delta:\frkC_{\LTS}^{p-1}(\g,V)\longrightarrow \frkC_{\LTS}^{p}(\g,V)$  is given by
  \begin{eqnarray*}
    &&(\delta \omega)(x_1,\cdots, x_{2p+1})\\
     &=& \theta(x_{2p},x_{2p+1})\omega(x_1,\cdots,x_{2p-1})-\theta(x_{2p-1},x_{2p+1})\omega(x_1,\cdots,x_{2p-2},x_{2p})\\
    &&+\sum_{k=1}^{p}(-1)^{k+p}D(x_{2k-1},x_{2k})\omega(x_1,\cdots,\widehat{ x_{2k-1}}\widehat{x_{2k}}, \cdots, x_{2p+1}) \\
     && +\sum_{k=1}^{p}\sum_{j=2k+1}^{2p+1}(-1)^{p+k+1}\omega(x_1,\cdots,\widehat{ x_{2k-1}}\widehat{x_{2k}}, \cdots,[x_{2k-1},x_{2k},x_j]_\g,\cdots, x_{2p+1}).
  \end{eqnarray*}
for all $x_i\in\frkg$.
}

We assume that $(\g,[\cdot,\cdot,\cdot]_\g)$ is a Lie triple system  and set $\pi(x,y,z)=[x,y,z]_\g$. Consider the coboundary operator
$\delta:C_{\LTS}^{n}(\g;\g)\longrightarrow C_{\LTS}^{n+1}(\g;\g)$ associated to the adjoint representation, then we have
\begin{thm}
For all $f\in C_{\LTS}^{n}(\g;\g)$, we have $$\delta f=(-1)^{n-1}\Courant{\pi, f},~\forall n=1,2,\dots.$$
\end{thm}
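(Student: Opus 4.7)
The plan is to expand $(-1)^{n-1}\Courant{\pi,f}$ using the definition of the bracket $\Courant{\cdot,\cdot}$ together with the explicit formulas \eqref{Nambu-1}--\eqref{Nambu-2}, and then to identify the resulting summands, one by one, with the four lines of Yamaguti's formula \eqref{eq:drho} specialised to the adjoint representation $\rho=\ad$. A small bookkeeping point first: since $C^p(\g,\g)=\Hom(\otimes^{2p+1}\g,\g)$, a Yamaguti $n$-cochain (taking $2n-1$ inputs) has controlling-algebra degree $n-1$. Hence $f\in C^n_{\LTS}(\g;\g)$ has degree $n-1$, $\pi$ has degree $1$, and $\Courant{\pi,f}$ has degree $n$, which matches $C^{n+1}_{\LTS}(\g;\g)$ as required. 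Expanding the bracket one obtains
\begin{equation*}
(-1)^{n-1}\Courant{\pi,f}=(-1)^{n-1}\pi\circ_1 f+\pi\circ_2 f-\sum_{k=1}^{n-1}(-1)^{k-1}f\circ_k\pi-(-1)^{n-1}f\circ_n\pi,
\end{equation*}
where $\pi\circ_2 f$ and $f\circ_n\pi$ come from \eqref{Nambu-2} and the other two terms from \eqref{Nambu-1}.

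Next I would evaluate each of the four pieces on $(\frkX_1,\dots,\frkX_n,z)$. For $\pi\circ_1 f$ the shuffle set $\mathbb{S}_{(0,n-1)}$ is trivial and the computation yields $[f(\frkX_1,\dots,\frkX_{n-1},x_n),y_n,z]_\g+[x_n,f(\frkX_1,\dots,\frkX_{n-1},y_n),z]_\g$; using the skew-symmetry $[u,v,w]_\g=-[v,u,w]_\g$ obtained by polarising \eqref{lts1} and rewriting via $\ad$, this equals $\rho(y_n,z)f(\dots,x_n)-\rho(x_n,z)f(\dots,y_n)$, so $(-1)^{n-1}\pi\circ_1 f$ reproduces the fourth line of \eqref{eq:drho}. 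For $\pi\circ_2 f$, parametrising the $(1,n-1)$-shuffles by $j=\sigma(1)\in\{1,\dots,n\}$ with sign $(-1)^{j-1}$ gives $\sum_{j=1}^n(-1)^{j-1}[x_j,y_j,f(\frkX_1,\dots,\hat{\frkX}_j,\dots,\frkX_n,z)]_\g$, and combining \eqref{lts1} with \eqref{lts2} yields the pointwise identity $[x,y,w]_\g=(\rho(y,x)-\rho(x,y))w$, matching the third line of \eqref{eq:drho}.

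The remaining two pieces reproduce the ``inner'' summands of \eqref{eq:drho}. For $f\circ_k\pi$ with $k\le n-1$, the two summands of \eqref{Nambu-1} assemble, by the very definition \eqref{Nam} of the Leibniz bracket on $\g\otimes\g$, into a single term $f(\frkX_1,\dots,\hat{\frkX}_j,\dots,\frkX_k,[\frkX_j,\frkX_{k+1}],\frkX_{k+2},\dots,\frkX_n,z)$ indexed by $j\in\{1,\dots,k\}$ with sign $(-1)^{k-j}$; summing over $k$, re-indexing $k'=k+1$, and collecting the overall prefactor $-(-1)^{k-1}$ gives exactly the first line $\sum_{j<k'}(-1)^j f(\dots,[\frkX_j,\frkX_{k'}],\dots,z)$ of \eqref{eq:drho}. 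For $f\circ_n\pi$, formula \eqref{Nambu-2} with the $(n-1,1)$-shuffles produces $\sum_{j=1}^n(-1)^{n-j}f(\frkX_1,\dots,\hat{\frkX}_j,\dots,\frkX_n,[\frkX_j,z]_\g)$, and the prefactor $-(-1)^{n-1}$ collapses the total sign to $(-1)^j$, matching the second line of \eqref{eq:drho}.

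The main (and essentially the only) obstacle is combinatorial sign bookkeeping: one must simultaneously track the $(-1)^{(k-1)q}$ weights coming from the definition of $P\circ Q$, the shuffle signs $(-1)^\sigma$ (best handled by reading off the omitted or inserted index), and the re-indexing $k\mapsto k+1$ in the $f\circ_k\pi$ sum. No further algebraic input is needed beyond \eqref{lts1}, \eqref{lts2} and the Leibniz bracket formula \eqref{Nam}, the latter being precisely what makes the two derivation-like terms of \eqref{Nambu-1} recombine into a single Yamaguti term.
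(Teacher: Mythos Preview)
Your proof is correct and follows essentially the same route as the paper: both expand $(-1)^{n-1}\Courant{\pi,f}$ into the four pieces $(-1)^{n-1}\pi\circ_1 f$, $\pi\circ_2 f$, $-\sum_{k=1}^{n-1}(-1)^{k-1}f\circ_k\pi$ and $-(-1)^{n-1}f\circ_n\pi$, and then identify these term-by-term with the four lines of \eqref{eq:drho} via the adjoint representation and the Leibniz bracket \eqref{Nam}. Your write-up is in fact somewhat more explicit about the shuffle parametrisations and the sign bookkeeping than the paper's computation, but there is no substantive difference in strategy.
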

\begin{proof}
For all $\frkX_i=x_i\otimes y_i\in\otimes^2\g,~i=1,2\dotsc,n$ and $z\in\g$, we have
\begin{eqnarray*}
&&(-1)^{n-1}\Courant{\pi, f}(\frkX_1,\dotsc ,\frkX_n,z)\\
&=&(-1)^{n-1}(\pi\circ f-(-1)^{n-1}f\circ \pi)(\frkX_1,\dotsc ,\frkX_n,z)\\
&=&(-1)^{n-1}\Bigg(\sum_{\sigma\in \mathbb S_{(0,n-1)}}(-1)^{\sigma}\pi\left(f(\frkX_{\sigma(1)},\dotsc,\frkX_{\sigma(n-1)},x_n)\otimes y_n,z\right)\\
&+&\sum_{\sigma\in \mathbb S_{(0,n-1)}}(-1)^{\sigma}\pi\left(x_n\otimes f(\frkX_{\sigma(1)},\dotsc,\frkX_{\sigma(n-1)},y_n),z\right)\\
&+&(-1)^{n-1}\sum_{\sigma\in \mathbb S_{(1,n-1)}}(-1)^{\sigma}\pi\left(\frkX_{\sigma(1)}, f(\frkX_{\sigma(2)},\dotsc,\frkX_{\sigma(n)},z)\right)\\
&-&(-1)^{n-1}\bigg(\sum_{k=1}^{n-1}(-1)^{k-1}\Big(\sum_{\sigma\in \mathbb S_{(k-1,1)}}
(-1)^{\sigma}f\left(\frkX_{\sigma(1)},\dotsc,\frkX_{\sigma(k-1)},x_{k+1}\otimes \pi(\frkX_{\sigma(k)},y_{k+1}) ,\frkX_{k+2},\dotsc,\frkX_{n},z\right)\\
&+&\sum_{\sigma\in \mathbb S_{(k-1,1)}}
(-1)^{\sigma}f\left(\frkX_{\sigma(1)},\dotsc,\frkX_{\sigma(k-1)},\pi(\frkX_{\sigma(k)},x_{k+1})\otimes y_{k+1},\frkX_{k+2},\dotsc,\frkX_{n},z\right)\Big)\\
&+&(-1)^{n-1}\sum_{\sigma\in \mathbb S_{(n-1,1)}}(-1)^{\sigma}f\left(\frkX_{\sigma(1)},\dotsc,\frkX_{\sigma(n-1)},\pi(\frkX_{\sigma(n)},z)\right)\bigg)\Bigg)\\
&=&(-1)^{n-1}\big(\rho(y_{n},z)f(\frkX_1,\dotsc,\frkX_{n-1},x_{n})-\rho(x_{n},z)f(\frkX_1,\dotsc,\frkX_{n-1},y_{n})\big)\\
&+&\sum_{j=1}^n(-1)^{j+1}\big(\rho(y_j,x_j)-\rho(x_j,y_j)\big)f(\frkX_1,\dotsc,\hat{\frkX}_j,\dotsc,\frkX_{n},z)\\
&+&\sum_{1\leq j<k\leq n}(-1)^jf(\frkX_1,\dotsc,\hat{\frkX}_j,\dotsc,\frkX_{k-1},[\frkX_j,\frkX_k],\frkX_{k+1},\dotsc,\frkX_{n},z)\\
&+&\sum_{j=1}^n(-1)^jf(\frkX_1,\dotsc,\hat{\frkX}_j,\dotsc,\frkX_{n},[\frkX_j,z]_\g)\\
&=&(\delta f)(\frkX_1,\dotsc ,\frkX_n,z),
\end{eqnarray*}
which finishes the proof.
\end{proof}

\section{Homotopy Nambu algebras and homotopy Lie triples systems}\label{sec:hom}

Let $\huaV^\bullet$ be a graded vector space. Denote by $\Ten(\huaV^\bullet)=\oplus_{i=0}^{+\infty}(\otimes^i\huaV^\bullet)$  and $\bar{\Ten}(\huaV^\bullet)=\oplus_{i=1}^{+\infty}(\otimes^i\huaV^\bullet)$. For two graded vector spaces $\huaW_1^\bullet$ and $\huaW_2^\bullet$, denote by $\Hom^n(\huaW_1^\bullet,\huaW_2^\bullet)$ the space of degree $n$ linear maps from the graded vector space $\huaW_1^\bullet$ to the graded vector space $\huaW_2^\bullet$. Obviously, an element $f\in\Hom^n(\bar{\Ten}(\huaV^\bullet),\huaV^\bullet)$ is the sum of $f_i:{{\otimes^ i}\huaV^\bullet}\lon \huaV^\bullet$. We will write  $f=\sum_{i=1}^{+\infty} f_i$.
 Set $\CLV^n(\huaV^\bullet,\huaV^\bullet):=\Hom^n(\bar{\Ten}(\huaV^\bullet),\huaV^\bullet)$ and
$
\CLV^\bullet(\huaV^\bullet,\huaV^\bullet):=\oplus_{n\in\mathbb Z}\CLV^n(\huaV^\bullet,\huaV^\bullet).
$
As the graded version of the Balavoine bracket given in \cite{Balavoine-1}, the {\bf graded Balavoine bracket} $[\cdot,\cdot]_{\B}$ on the graded vector space $\CLV^\bullet(\huaV^\bullet,\huaV^\bullet)$ is given
by:
\begin{eqnarray}
\qquad [f,g]_{\B}:=f\bar{\circ} g-(-1)^{mn}g\bar{\circ} f,\,\,\,\,\forall f=\sum_{i=1}^{+\infty} f_i\in \CLV^m(\huaV^\bullet,\huaV^\bullet),~g=\sum_{j=1}^{+\infty}g_j\in \CLV^n(\huaV^\bullet,\huaV^\bullet),
\label{eq:gfgcirc-B}
\end{eqnarray}
where $f\bar{\circ} g\in \CLV^{m+n}(\huaV^\bullet,\huaV^\bullet)$ is defined by
 \begin{eqnarray}\label{graded-NR-circ-B}
f\bar{\circ} g&=&\Big(\sum_{i=1}^{+\infty}f_i\Big)\bar{\circ}\Big(\sum_{j=1}^{+\infty}g_j\Big):=\sum_{s=1}^{+\infty}\Big(\sum_{i+j=s+1}f_i\bar{\circ} g_j\Big),
\end{eqnarray}
while $f_i\bar{\circ} g_j\in \Hom({{\otimes ^s}\huaV^\bullet},\huaV^\bullet)$ is defined by
$f_i\bar{\circ} g_j=\sum_{k=1}^{i}f_i\bar{\circ}_k g_j
$
and $f_i\bar{\circ}_k g_j$ is defined by
\begin{equation}\label{commutator2}
\begin{split}
&(f_i\bar{\circ}_k g_j)(v_1,\cdots,v_{s})\\
={}& \sum_{\sigma\in\mathbb S_{(k-1,j-1)}}(-1)^{\beta_k}\varepsilon(\sigma)f_i(v_{\sigma(1)},\cdots,v_{\sigma(k-1)},g_j(v_{\sigma(k)},\cdots,v_{\sigma(k+j-2)},v_{k+j-1}),v_{k+j},\cdots,v_{s}),
\end{split}
\end{equation}
where $\beta_k=n(v_{\sigma(1)}+v_{\sigma(2)}+\cdots+v_{\sigma(k-1)})$.

Similar as the classical case, $(\CLV^\bullet(\huaV^\bullet,\huaV^\bullet),[\cdot,\cdot]_{\B})$ is a graded Lie algebra.

The notion of a Leibniz$_\infty$-algebra was given in \cite{livernet} by Livernet, which was further studied  in \cite{ammardefiLeibnizalgebra,Leibniz2al,STZ,Uchino-1}.
 \begin{defi}
A {\bf  Leibniz$_\infty$-algebra} is a $\mathbb Z$-graded vector space $\la^\bullet=\oplus_{k\in\mathbb Z}\la^k$ equipped with a collection $(k\ge 1)$ of linear maps $\oprn_k:\otimes^k\la^\bullet\lon\la^\bullet$ of degree $1$ such that  $\sum_{k=1}^{+\infty}\oprn_k$ is an Maurer-Cartan element of the graded Lie algebra $(\CLV^\bullet(\la^\bullet,\la^\bullet),[\cdot,\cdot]_{\B})$. More precisely, for any homogeneous elements $x_1,\cdots,x_n\in \la^\bullet$, the following equality holds:
\begin{eqnarray*}
\sum_{i=1}^{n}\sum_{k=1}^{n-i+1}\sum_{\sigma\in \mathbb S_{(k-1,i-1)} }(-1)^{\gamma_{k}}\varepsilon(\sigma)\oprn_{n-i+1}(x_{\sigma(1)},\cdots,x_{\sigma(k-1)},\oprn_i(x_{\sigma(k)},\cdots,x_{\sigma(k+i-2)},x_{k+i-1}),x_{k+i},\cdots,x_{n})=0,
\end{eqnarray*}
where $\gamma_{k}=x_{\sigma(1)}+\cdots+x_{\sigma(k-1)}$. We denote a Leibniz$_\infty$-algebra by $(\la^\bullet,\{\oprn_k\}_{k=1}^{+\infty})$.
\end{defi}

 Denote by $$\CV^n(\huaV^\bullet,\huaV^\bullet)=\oplus_{p=1}^{+\infty}\Hom^{n+p-1}(\otimes^{p-1}(\huaV^\bullet\otimes \huaV^\bullet)\otimes\huaV^\bullet,\huaV^\bullet),$$ which is the space of degree $n+p-1$ linear maps from the graded vector space $\oplus_{p=1}^{+\infty}(\otimes^{p-1}(\huaV^\bullet\otimes \huaV^\bullet)\otimes\huaV^\bullet)$ to the graded vector space $\huaV^\bullet$. Any $ f\in\oplus_{p=1}^{+\infty}\Hom^{n+p-1}(\otimes^{p-1}(\huaV^\bullet\otimes \huaV^\bullet)\otimes\huaV^\bullet,\huaV^\bullet)$ is the sum of $f_{2p-1}:\otimes^{p-1}(\huaV^\bullet\otimes \huaV^\bullet)\otimes \huaV^\bullet\lon \huaV^\bullet$. We will write  $f=\sum_{p=1}^{+\infty} f_{2p-1}$.
We define a graded linear map $\Psi:\CV^n(\huaV^\bullet,\huaV^\bullet)\lon \CLV^n(s(\huaV^\bullet\otimes \huaV^\bullet),s(\huaV^\bullet\otimes \huaV^\bullet))$ by
\begin{eqnarray}\label{3-Leibniz-to-Leibniz}
&&(\Psi f_{2p-1})(s\huaX_1,\cdots,s\huaX_{p-1},s(x\otimes y))\\
\nonumber &=&(-1)^{\frac{p(p-1)}{2}}s\Big(f_{2p-1}(\huaX_1,\cdots,\huaX_{p-1},x)\otimes y+(-1)^{x(\huaX_1+\cdots+\huaX_{p-1}+n+p-1)}x\otimes  f_{2p-1}(\huaX_1,\cdots,\huaX_{p-1},y)\Big),
\end{eqnarray}

for all $\huaX_i=x_i\otimes y_i\in \huaV^\bullet\otimes \huaV^\bullet, x,y\in\huaV^\bullet$.  Moreover, straightforward computation gives that $\Psi$ is an injective linear map.

\begin{thm}\label{Nambu-graded-Lie}
With the above notations,  $\Img\Psi$ is a graded Lie subalgebra of $(\CLV^\bullet(\huaV^\bullet\otimes \huaV^\bullet,\huaV^\bullet\otimes \huaV^\bullet),[\cdot,\cdot]_{\B})$.
\end{thm}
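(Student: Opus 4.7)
The plan is to extend the classical argument of \cite{Rot} to the graded setting: I will directly compute $[\Psi(P),\Psi(Q)]_{\B}$ for arbitrary $P\in\CV^m(\huaV^\bullet,\huaV^\bullet)$ of arity $p$ and $Q\in\CV^n(\huaV^\bullet,\huaV^\bullet)$ of arity $q$, and exhibit it as $\Psi(H)$ for some explicit $H\in\CV^{m+n}(\huaV^\bullet,\huaV^\bullet)$. Decomposing via \eqref{graded-NR-circ-B}, I will examine each $\Psi(P)\bar{\circ}_k\Psi(Q)$ with $1\le k\le p$ and argue that each contribution either lies in $\Img\Psi$ on the nose or pairs with a matching contribution from $\Psi(Q)\bar{\circ}\Psi(P)$ that cancels it in the antisymmetrized commutator.

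For the interior indices $1\le k\le p-1$, the output of $\Psi(Q)$ is substituted at position $k$ of $\Psi(P)$ while the final input $s(x\otimes y)$ of $\Psi(P)$ is untouched. Since \eqref{3-Leibniz-to-Leibniz} makes $\Psi(P)$ act as a derivation on precisely its last slot, and since $\Psi(Q)$'s two-term output $s(Q(\dots,x_{k+q-1})\otimes y_{k+q-1})\pm s(x_{k+q-1}\otimes Q(\dots,y_{k+q-1}))$ is merely consumed by $P$ as a tensor pair at an intermediate slot, the resulting expression visibly takes the form $\Psi(H_k)$ for an explicit $H_k\in\CV^{m+n}$ that is the graded analogue of \eqref{Nambu-1}. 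For the terminal index $k=p$, the two-term output of $\Psi(Q)$ becomes the last argument of $\Psi(P)$ and is split once more by $\Psi(P)$'s derivation rule, yielding four classes of terms: two \emph{pure} terms $s(P(\dots,Q(\dots,x))\otimes y)$ and $s(x\otimes P(\dots,Q(\dots,y)))$ that jointly equal $\Psi(H_p)$ (the graded analogue of \eqref{Nambu-2}), and two \emph{mixed} terms $s(Q(\dots,x)\otimes P(\dots,y))$ and $s(P(\dots,x)\otimes Q(\dots,y))$ which are not of derivation form and hence do not individually lie in $\Img\Psi$.

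The main obstacle is to verify that the mixed terms from $\Psi(P)\bar{\circ}_p\Psi(Q)$ are cancelled by those of $\Psi(Q)\bar{\circ}_q\Psi(P)$ in the antisymmetrized bracket. The two families of mixed terms are parameterized by complementary shuffles in $\mathbb S_{(p-1,q-1)}$ and $\mathbb S_{(q-1,p-1)}$, related by a block swap producing a sign $(-1)^{(p-1)(q-1)}$. I will tally this against the prefactors $(-1)^{p(p-1)/2}$ and $(-1)^{q(q-1)/2}$ coming from \eqref{3-Leibniz-to-Leibniz}, the Koszul shuffle sign $\varepsilon(\sigma)$ and prefactor $(-1)^{\beta_k}$ in \eqref{commutator2}, the Koszul signs generated by commuting the degrees of $P$ and $Q$ past internal $\huaV^\bullet$-factors $x$ and $y$, and the overall sign $-(-1)^{mn}$ from the graded commutator; I expect all these contributions to sum to zero. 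Combining this cancellation with the interior identification gives $[\Psi(P),\Psi(Q)]_{\B}=\Psi\bigl(\sum_{k=1}^{p}H_k-(-1)^{mn}\sum_{k=1}^{q}H'_k\bigr)\in\Img\Psi$, which proves closure. As a by-product, this calculation also produces the explicit formula for a graded Lie bracket on $\CV^\bullet(\huaV^\bullet,\huaV^\bullet)$ that makes $\Psi$ a morphism of graded Lie algebras, which will be the correct framework for defining homotopy Nambu algebras in the next section.
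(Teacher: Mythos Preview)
Your proposal is correct and follows essentially the same route as the paper's own proof: the paper likewise shows that each $\Psi(f_i)\bar{\circ}_k\Psi(g_j)$ lies in $\Img\Psi$ for the interior indices $k=1,\dots,i-1$, then expands the terminal compositions $\Psi(f_i)\bar{\circ}_i\Psi(g_j)$ and $\Psi(g_j)\bar{\circ}_j\Psi(f_i)$ into four terms each and checks that the two ``mixed'' terms cancel in the graded commutator, leaving exactly the derivation-form expression $\Psi(\,\cdot\,)$ recorded in \eqref{LTS-circ-3}. The sign bookkeeping you outline (block-swap shuffle sign, the $(-1)^{p(p-1)/2}$ prefactors from \eqref{3-Leibniz-to-Leibniz}, the Koszul signs $\beta_k$, $\gamma_k$, and the commutator sign $-(-1)^{mn}$) is precisely what the paper carries out explicitly.
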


\begin{proof}
For any $f_i\in \Hom^m({{\otimes ^{i-1}}(\huaV^\bullet\otimes \huaV^\bullet)\otimes \huaV^\bullet},\huaV^\bullet),~g_j\in \Hom^n({{\otimes ^{j-1}}(\huaV^\bullet\otimes \huaV^\bullet)\otimes \huaV^\bullet},\huaV^\bullet)$ and $k=1,\cdots,i-1$,  we deduce that  $\Psi(f_i)\bar{\circ}_k\Psi(g_j)\in \Img\Psi$ from \eqref{commutator2}. Moreover, we have
{\footnotesize
\begin{eqnarray}
&&\Psi^{-1}\Big(\Psi(f_i)\bar{\circ}_k\Psi(g_j)\Big)(\huaX_1,\cdots,\huaX_{i+j-2},x)\\
\nonumber &=&\sum_{\sigma\in \mathbb S_{(k-1,j-1)}}
(-1)^{\beta_k}\varepsilon(\sigma)f_i\left(\huaX_{\sigma(1)},\dotsc,\huaX_{\sigma(k-1)},g_j(\huaX_{\sigma(k)},\dotsc,\huaX_{\sigma(k+j-2)},x_{k+j-1})\otimes y_{k+j-1},\huaX_{k+j},\dotsc,\huaX_{i+j-2},x\right)\\
\nonumber &&+\sum_{\sigma\in \mathbb S_{(k-1,j-1)}}
(-1)^{\beta_k+\gamma_k}\varepsilon(\sigma)f_i\left(\huaX_{\sigma(1)},\dotsc,\huaX_{\sigma(k-1)},x_{k+j-1}\otimes g_j(\huaX_{\sigma(k)},\dotsc,\huaX_{\sigma(k+j-2)},y_{k+j-1}) ,\huaX_{k+j},\dotsc,\huaX_{i+j-2},x\right),
\end{eqnarray}
}
where $\huaX_i=x_i\otimes y_i\in \huaV^\bullet \otimes \huaV^\bullet, x\in \huaV^\bullet$, $\beta_k=n(\huaX_{\sigma(1)}+\huaX_{\sigma(2)}+\cdots+\huaX_{\sigma(k-1)})$ and $\gamma_k=x_{k+j-1}(\huaX_{\sigma(k)}+\cdots+\huaX_{\sigma(k+j-2)}+n)$.
For $k=i$, we have
\begin{eqnarray*}
&&\Big(\Psi(f_i)\bar{\circ}_i\Psi(g_j)\Big)(s\huaX_1,\cdots,s\huaX_{i+j-2},s(x\otimes y))\\
&=&\sum_{\sigma\in \mathbb S_{(i-1,j-1)}}
(-1)^{\overline{\beta}_i}\varepsilon(\sigma)\Psi(f_i)\left(s\huaX_{\sigma(1)},\dotsc,s\huaX_{\sigma(i-1)},\Psi(g_j)(s\huaX_{\sigma(i)},\dotsc,s\huaX_{\sigma(i+j-2)},s(x\otimes y))\right)\\
&=&(-1)^{\frac{j(j-1)}{2}}\bigg(\sum_{\sigma\in \mathbb S_{(i-1,j-1)}}
(-1)^{\overline{\beta}_i}\varepsilon(\sigma)\Psi(f_i)\left(s\huaX_{\sigma(1)},\dotsc,s\huaX_{\sigma(i-1)},s(g_j(\huaX_{\sigma(i)},\dotsc,\huaX_{\sigma(i+j-2)},x)\otimes y)\right)\\
&&+\sum_{\sigma\in \mathbb S_{(i-1,j-1)}}
(-1)^{\overline{\beta}_i+\overline{\gamma}_i}\varepsilon(\sigma)\Psi(f_i)\left(s\huaX_{\sigma(1)},\dotsc,s\huaX_{\sigma(i-1)},s(x\otimes g_j(\huaX_{\sigma(i)},\dotsc,\huaX_{\sigma(i+j-2)}, y))\right)\bigg)\\
&=&(-1)^{\frac{i(i-1)+j(j-1)}{2}}s\bigg(\sum_{\sigma\in \mathbb S_{(i-1,j-1)}}
(-1)^{\overline{\beta}_i}\varepsilon(\sigma)f_i\left(\huaX_{\sigma(1)},\dotsc,\huaX_{\sigma(i-1)},g_j(\huaX_{\sigma(i)},\dotsc,\huaX_{\sigma(i+j-2)},x)\right)\otimes y\\
&&+\sum_{\sigma\in \mathbb S_{(i-1,j-1)}}
(-1)^{\overline{\beta}_i}(-1)^{(\huaX_{\sigma(i)}+\cdots+\huaX_{\sigma(i+j-2)}+x+n)(\huaX_{\sigma(1)}+\cdots+\huaX_{\sigma(i-1)}+m)}\varepsilon(\sigma)\\
&&g_j(\huaX_{\sigma(i)},\dotsc,\huaX_{\sigma(i+j-2)},x)\otimes f_i\left(\huaX_{\sigma(1)},\dotsc,\huaX_{\sigma(i-1)}, y\right)\\
&&+\sum_{\sigma\in \mathbb S_{(i-1,j-1)}}
(-1)^{\overline{\beta}_i+\overline{\gamma}_i}\varepsilon(\sigma)f_i\left(\huaX_{\sigma(1)},\dotsc,\huaX_{\sigma(i-1)},x\right)\otimes g_j(\huaX_{\sigma(i)},\dotsc,\huaX_{\sigma(i+j-2)}, y)\\
&&+\sum_{\sigma\in \mathbb S_{(i-1,j-1)}}
(-1)^{\overline{\beta}_i+\overline{\gamma}_i}(-1)^{x(\huaX_{\sigma(1)}+\cdots+\huaX_{\sigma(i-1)}+m)}\varepsilon(\sigma)\\
&&x\otimes f_i\left(\huaX_{\sigma(1)},\dotsc,\huaX_{\sigma(i-1)},g_j(\huaX_{\sigma(i)},\dotsc,\huaX_{\sigma(i+j-2)}, y)\right)\bigg)
\end{eqnarray*}
where $\overline{\beta}_i=n(\huaX_{\sigma(1)}+\huaX_{\sigma(2)}+\cdots+\huaX_{\sigma(i-1)})$ and $\overline{\gamma}_i=x(\huaX_{\sigma(i)}+\cdots+\huaX_{\sigma(i+j-2)}+n)$.
Similarly, we can gain
\begin{eqnarray*}
&&\Big(\Psi(g_j)\bar{\circ}_j\Psi(f_i)\Big)(s\huaX_1,\cdots,s\huaX_{i+j-2},s(x\otimes y))\\
&=&(-1)^{\frac{i(i-1)+j(j-1)}{2}}s\bigg(\sum_{\sigma\in \mathbb S_{(j-1,i-1)}}
(-1)^{\widetilde{\beta}_j}\varepsilon(\sigma)g_j\left(\huaX_{\sigma(1)},\dotsc,\huaX_{\sigma(j-1)},f_i(\huaX_{\sigma(j)},\dotsc,\huaX_{\sigma(i+j-2)},x)\right)\otimes y\\
&&+\sum_{\sigma\in \mathbb S_{(j-1,i-1)}}
(-1)^{\widetilde{\beta}_j}(-1)^{(\huaX_{\sigma(j)}+\cdots+\huaX_{\sigma(i+j-2)}+x+m)(\huaX_{\sigma(1)}+\cdots+\huaX_{\sigma(j-1)}+n)}\varepsilon(\sigma)\\
&&f_i(\huaX_{\sigma(j)},\dotsc,\huaX_{\sigma(i+j-2)},x)\otimes g_j\left(\huaX_{\sigma(1)},\dotsc,\huaX_{\sigma(j-1)},y\right)\\
&&+\sum_{\sigma\in \mathbb S_{(j-1,i-1)}}
(-1)^{\widetilde{\beta}_j+\widetilde{\gamma}_j}\varepsilon(\sigma)g_j\left(\huaX_{\sigma(1)},\dotsc,\huaX_{\sigma(j-1)},x\right)\otimes f_i(\huaX_{\sigma(j)},\dotsc,\huaX_{\sigma(i+j-2)}, y)\\
&&+\sum_{\sigma\in \mathbb S_{(j-1,i-1)}}
(-1)^{\widetilde{\beta}_j+\widetilde{\gamma}_j}(-1)^{x(\huaX_{\sigma(1)}+\cdots+\huaX_{\sigma(j-1)}+n)}\varepsilon(\sigma)\\
&&x\otimes g_j\left(\huaX_{\sigma(1)},\dotsc,\huaX_{\sigma(j-1)},f_i(\huaX_{\sigma(j)},\dotsc,\huaX_{\sigma(i+j-2)}, y)\right)\bigg)
\end{eqnarray*}
where $\widetilde{\beta}_j=m(\huaX_{\sigma(1)}+\huaX_{\sigma(2)}+\cdots+\huaX_{\sigma(j-1)})$ and $\widetilde{\gamma}_j=x(\huaX_{\sigma(j)}+\cdots+\huaX_{\sigma(i+j-2)}+m)$.
Thus, we obtain
\begin{eqnarray*}
&&\Big(\Psi(f_i)\bar{\circ}_i\Psi(g_j)-(-1)^{mn}\Psi(g_j)\bar{\circ}_j\Psi(f_i)\Big)(s\huaX_1,\cdots,s\huaX_{i+j-2},s(x\otimes y))\\
&=&(-1)^{\frac{i(i-1)+j(j-1)}{2}}s\bigg(\sum_{\sigma\in \mathbb S_{(i-1,j-1)}}
(-1)^{\overline{\beta}_i}\varepsilon(\sigma)f_i\left(\huaX_{\sigma(1)},\dotsc,\huaX_{\sigma(i-1)},g_j(\huaX_{\sigma(i)},\dotsc,\huaX_{\sigma(i+j-2)},x)\right)\otimes y\\
&&+\sum_{\sigma\in \mathbb S_{(i-1,j-1)}}
(-1)^{\overline{\beta}_i}(-1)^{x(\huaX_{\sigma(1)}+\cdots+\huaX_{\sigma(i+j-2)}+m+n)}\varepsilon(\sigma)\\
&&x\otimes f_i\left(\huaX_{\sigma(1)},\dotsc,\huaX_{\sigma(i-1)},g_j(\huaX_{\sigma(i)},\dotsc,\huaX_{\sigma(i+j-2)}, y)\right)\\
&&-\sum_{\sigma\in \mathbb S_{(j-1,i-1)}}
(-1)^{mn}(-1)^{\widetilde{\beta}_j}\varepsilon(\sigma)g_j\left(\huaX_{\sigma(1)},\dotsc,\huaX_{\sigma(j-1)},f_i(\huaX_{\sigma(j)},\dotsc,\huaX_{\sigma(i+j-2)},x)\right)\otimes y\\
&&-\sum_{\sigma\in \mathbb S_{(j-1,i-1)}}
(-1)^{mn}(-1)^{\widetilde{\beta}_j}(-1)^{x(\huaX_{\sigma(1)}+\cdots+\huaX_{\sigma(i+j-2)}+m+n)}\varepsilon(\sigma)\\
&&x\otimes g_j\left(\huaX_{\sigma(1)},\dotsc,\huaX_{\sigma(j-1)},f_i(\huaX_{\sigma(j)},\dotsc,\huaX_{\sigma(i+j-2)}, y)\right)\bigg)
\end{eqnarray*}
Therefore,  $\Psi(f_i)\bar{\circ}_i\Psi(g_j)-(-1)^{mn}\Psi(g_j)\bar{\circ}_j\Psi(f_i)\in\Img\Psi$. More precisely, we have
\begin{eqnarray}
\label{LTS-circ-3}&&\Psi^{-1}\Big(\Psi(f_i)\bar{\circ}_i\Psi(g_j)-(-1)^{mn}\Psi(g_j)\bar{\circ}_j\Psi(f_i)\Big)(\huaX_1,\cdots,\huaX_{i+j-2},x)\\
\nonumber &=&\sum_{\sigma\in \mathbb S_{(i-1,j-1)}}
(-1)^{\beta_i}\varepsilon(\sigma)f_i\left(\huaX_{\sigma(1)},\dotsc,\huaX_{\sigma(i-1)},g_j(\huaX_{\sigma(i)},\dotsc,\huaX_{\sigma(i+j-2)},x)\right)\\
\nonumber &&-\sum_{\sigma\in \mathbb S_{(j-1,i-1)}}
(-1)^{mn}(-1)^{\beta_j}\varepsilon(\sigma)g_j\left(\huaX_{\sigma(1)},\dotsc,\huaX_{\sigma(j-1)},f_i(\huaX_{\sigma(j)},\dotsc,\huaX_{\sigma(i+j-2)},x)\right).
\end{eqnarray}
Moreover, by  \eqref{eq:gfgcirc-B}, we deduce  that $[\Psi(f_i),\Psi(g_j)]_\B\in \Img\Psi$. The proof is finished.
\end{proof}

Consider the graded vector space $\CV^\bullet(\huaV^\bullet,\huaV^\bullet)=\oplus_{n\in\mathbb Z}\CV^n(\huaV^\bullet,\huaV^\bullet)$.
We define a graded bracket operation $$\Courant{\cdot,\cdot}: \CV^m(\huaV^\bullet,\huaV^\bullet)\times \CV^n(\huaV^\bullet,\huaV^\bullet)\longrightarrow \CV^{m+n}(\huaV^\bullet,\huaV^\bullet)$$ by
\begin{eqnarray*}
\Courant{f,g}=\Psi^{-1}\big([\Psi(f),\Psi(g)]_\B\big),\,\,\forall f\in \CV^m(\huaV^\bullet,\huaV^\bullet),~g\in \CV^n(\huaV^\bullet,\huaV^\bullet).
\end{eqnarray*}
More precisely,
$\Courant{f,g}=f\circ g-(-1)^{\deg(f)\deg(g)}g\circ f$, where $f\circ g\in \CV^{m+n}(\huaV^\bullet,\huaV^\bullet)$ is defined by
\begin{eqnarray*}
f\circ g&=&\sum_{s=0}^{+\infty}\Big(\sum_{i+j=s+2}f_i\circ g_j\Big),
\end{eqnarray*}
while $f_i\circ g_j\in \Hom({{\otimes ^s}(\huaV^\bullet\otimes \huaV^\bullet)\otimes \huaV^\bullet},\huaV^\bullet)$ is defined by
$f_i\circ g_j=\sum_{k=1}^{i}f_i\circ_k g_j$
and $\circ_k,~k=1,\cdots,i-1,$ is defined by
{\footnotesize
\begin{eqnarray*}
&&(f_i\circ_k g_j)(\huaX_1,\dotsc,\huaX_{s},x)\\
&=&\sum_{\sigma\in \mathbb S_{(k-1,j-1)}}
(-1)^{\beta_k}\varepsilon(\sigma)f_i\left(\huaX_{\sigma(1)},\cdots,\huaX_{\sigma(k-1)},g_j(\huaX_{\sigma(k)},\cdots,\huaX_{\sigma(k+j-2)},x_{k+j-1})\otimes y_{k+j-1},\huaX_{k+j},\cdots,\huaX_{s},x\right)\\
&&+\sum_{\sigma\in \mathbb S_{(k-1,j-1)}}
(-1)^{\beta_k+\gamma_k}\varepsilon(\sigma)f_i\left(\huaX_{\sigma(1)},\cdots,\huaX_{\sigma(k-1)},x_{k+j-1}\otimes g_j(\huaX_{\sigma(k)},\cdots,\huaX_{\sigma(k+j-2)},y_{k+j-1}),\huaX_{k+j},\cdots,\huaX_{s},x\right),
\end{eqnarray*}
}
and $\circ_{i}$ is defined by
\begin{eqnarray*}
&&(f_i\circ_{i} g_j)(\huaX_1,\dotsc,\huaX_{s},x)\\
&=&\sum_{\sigma\in \mathbb S_{(i-1,j-1)}}
(-1)^{\beta_i}\varepsilon(\sigma)f_i\left(\huaX_{\sigma(1)},\dotsc,\huaX_{\sigma(i-1)},g_j(\huaX_{\sigma(i)},\dotsc, \huaX_{\sigma(s)},x)\right),
\end{eqnarray*}
where $\huaX_i=x_i\otimes y_i\in \huaV^\bullet \otimes \huaV^\bullet, x\in \huaV^\bullet$, $\beta_k=(n+j-1)(\huaX_{\sigma(1)}+\huaX_{\sigma(2)}+\cdots+\huaX_{\sigma(k-1)})$ and $\gamma_k=x_{k+j-1}(\huaX_{\sigma(k)}+\cdots+\huaX_{\sigma(k+j-2)}+n+j-1)$.

\begin{thm}
With the above notations, $(\CV^\bullet(\huaV^\bullet,\huaV^\bullet),\Courant{\cdot,\cdot})$ is a graded Lie algebra.
\end{thm}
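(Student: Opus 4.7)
The plan is to exploit the fact that the bracket $\Courant{\cdot,\cdot}$ on $\CV^\bullet(\huaV^\bullet,\huaV^\bullet)$ is, by construction, obtained by transport of structure along the injective graded linear map $\Psi$. Since Theorem \ref{Nambu-graded-Lie} establishes that $\Img\Psi$ is closed under the graded Balavoine bracket $[\cdot,\cdot]_\B$, and $\Psi$ is injective, $\Psi$ restricts to a degree-preserving linear isomorphism
\[
\Psi\colon \CV^\bullet(\huaV^\bullet,\huaV^\bullet)\;\xrightarrow{\ \cong\ }\;\Img\Psi\;\subset\;\CLV^\bullet(\huaV^\bullet\otimes\huaV^\bullet,\huaV^\bullet\otimes\huaV^\bullet),
\]
and the subspace on the right is a graded Lie subalgebra of $(\CLV^\bullet(\huaV^\bullet\otimes\huaV^\bullet,\huaV^\bullet\otimes\huaV^\bullet),[\cdot,\cdot]_\B)$.

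Next I would verify the two graded Lie algebra axioms for $\Courant{\cdot,\cdot}$ abstractly, using only the defining identity $\Courant{f,g}=\Psi^{-1}\bigl([\Psi(f),\Psi(g)]_\B\bigr)$. For graded skew-symmetry, applying $\Psi$ to $\Courant{f,g}+(-1)^{mn}\Courant{g,f}$ yields $[\Psi(f),\Psi(g)]_\B+(-1)^{mn}[\Psi(g),\Psi(f)]_\B=0$; injectivity of $\Psi$ then forces the original expression to vanish. For the graded Jacobi identity, one applies $\Psi$ to the cyclic sum
\[
(-1)^{mp}\Courant{f,\Courant{g,h}}+(-1)^{nm}\Courant{g,\Courant{h,f}}+(-1)^{pn}\Courant{h,\Courant{f,g}}
\]
and observes that, because $\Img\Psi$ is closed under $[\cdot,\cdot]_\B$, inner brackets $\Psi(\Courant{g,h})$ coincide with $[\Psi(g),\Psi(h)]_\B$, so the image becomes the corresponding cyclic sum for $[\cdot,\cdot]_\B$ on $\Img\Psi$, which vanishes by the graded Jacobi identity in $(\CLV^\bullet,[\cdot,\cdot]_\B)$. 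Injectivity of $\Psi$ finishes the argument.

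Finally, for the explicit formula of $\Courant{\cdot,\cdot}$ written just before the theorem, I would note that it is nothing but the unpacking of $\Psi^{-1}\bigl([\Psi(f),\Psi(g)]_\B\bigr)$ according to the summation $P\bar\circ Q=\sum_k P\bar\circ_k Q$ in \eqref{graded-NR-circ-B}. For the indices $k=1,\dots,i-1$ the two terms in $f_i\circ_k g_j$ are exactly the two summands arising from distributing $g_j$ into the two tensor slots of $\huaX_{k+j-1}=x_{k+j-1}\otimes y_{k+j-1}$, read off from \eqref{3-Leibniz-to-Leibniz} and \eqref{commutator2}; for $k=i$ the single term $f_i\circ_{i}g_j$ is precisely the expression computed in equation \eqref{LTS-circ-3} in the proof of Theorem \ref{Nambu-graded-Lie}, after canceling the common prefactor $(-1)^{(i(i-1)+j(j-1))/2}$ coming from the suspensions $s$.

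The only technical obstacle is bookkeeping of signs: confirming that the prefactor $(-1)^{\beta_k}$ with $\beta_k=(n+j-1)(\huaX_{\sigma(1)}+\cdots+\huaX_{\sigma(k-1)})$ in the statement matches the sign produced by $\Psi$ (which contributes $(-1)^{\frac{p(p-1)}{2}}$ and the Koszul sign from moving $g_j$ past earlier suspended entries). Once this sign match is done, the graded Lie axioms follow automatically from the abstract transport argument above, so no further computation is required.
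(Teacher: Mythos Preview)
Your proposal is correct and follows essentially the same route as the paper: the paper's own proof is the single sentence ``It follows from Theorem \ref{Nambu-graded-Lie} directly,'' and your argument is precisely the transport-of-structure justification behind that sentence, spelled out in detail. The additional discussion you give about matching the explicit formula for $\Courant{\cdot,\cdot}$ and tracking the signs $(-1)^{\beta_k}$ is not needed for the theorem as stated (since $\Courant{\cdot,\cdot}$ is \emph{defined} by $\Psi^{-1}([\Psi(f),\Psi(g)]_\B)$), but it is harmless and clarifies the ``More precisely'' paragraph preceding the theorem.
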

\begin{proof}
  It follows from Theorem \ref{Nambu-graded-Lie} directly.
\end{proof}

In the classical case, Nambu algebras are Maurer-Cartan elements of the graded Lie algebra given in Theorem \ref{Nambu-lie-algebra}.
Using this point of view, Maurer-Cartan elements of the graded Lie algebra $(\CV^\bullet(\huaV^\bullet,\huaV^\bullet),\Courant{\cdot,\cdot})$ obtained above should be homotopy Nambu  algebra structures on the graded vector space $\huaV^\bullet$. Thus we propose the following definition.

\begin{defi}
A {\bf homotopy Nambu algebra}  is a $\mathbb Z$-graded vector space $\huaV^\bullet=\oplus_{k\in\mathbb Z}\huaV^k$ equipped with a collection $(k\ge 1)$ of linear maps $\Oprn_{2k-1}:\otimes^{2k-1}\huaV^\bullet\lon\huaV^\bullet$ of degree $k$ such that $\sum_{k=1}^{+\infty}\Oprn_{2k-1}$ is an Maurer-Cartan element\footnote{$\Oprn_{2k-1}$ is of degree $k$ implies that $\Oprn_{2k-1}\in \CV^1(\huaV^\bullet,\huaV^\bullet).$} of the graded Lie algebra $(\CV^\bullet(\huaV^\bullet,\huaV^\bullet),\Courant{\cdot,\cdot})$.
More precisely, for any homogeneous elements $\huaX_1,\cdots,\huaX_{n-1}\in \huaV^\bullet\otimes \huaV^\bullet, x\in \huaV^\bullet$, the following equality holds:
\begin{eqnarray}
\label{eq:homNA}&&\sum_{i=1}^{n}\sum_{k=1}^{n-i}\sum_{\sigma\in \mathbb S_{(k-1,i-1)} }(-1)^{\gamma_{k}}\varepsilon(\sigma)\\
\nonumber&&\Oprn_{2n-2i+1}(\huaX_{\sigma(1)},\cdots,\huaX_{\sigma(k-1)},\oprn_i(\huaX_{\sigma(k)},\cdots,\huaX_{\sigma(k+i-2)},\huaX_{k+i-1}),\huaX_{k+i},\cdots,\huaX_{n-1},x)\\
\nonumber&&+\sum_{i=1}^{n}\sum_{\sigma\in \mathbb S_{(n-i,i-1)} }(-1)^{\gamma_{n-i+1}}\varepsilon(\sigma)\Oprn_{2n-2i+1}(\huaX_{\sigma(1)},\cdots,\huaX_{\sigma(n-i)},\Oprn_{2i-1}(\huaX_{\sigma(n-i+1)},\cdots,\huaX_{\sigma(n-1)},x))=0,
\end{eqnarray}
where $\gamma_{k}=i(\huaX_{\sigma(1)}+\cdots+\huaX_{\sigma(k-1)}),~k=1,2,\cdots,n-i+1,$ and $\oprn_i$ is defined by
\begin{eqnarray}\label{homotopy-nabum-to-homotopy-leibniz}
&&\oprn_i(\huaX_1,\cdots,\huaX_{i})\\
\nonumber &=&\Oprn_{2i-1}(\huaX_1,\cdots,\huaX_{i-1},x_i)\otimes y_i+(-1)^{x_i(\huaX_1+\cdots+\huaX_{i-1}+i)}x_i\otimes \Oprn_{2i-1}(\huaX_1,\cdots,\huaX_{i-1},y_i).
\end{eqnarray}
\end{defi}
 We denote a homotopy Nambu algebra by $(\huaV^\bullet,\{\Oprn_{2k-1}\}_{k=1}^{+\infty})$.

\begin{cor}
With the above notations, $\Psi$ is a  homomorphism from the  graded Lie algebra $(\CV^\bullet(\huaV^\bullet,\huaV^\bullet),\Courant{\cdot,\cdot})$ to $(\CLV^\bullet(s(\huaV^\bullet\otimes \huaV^\bullet),s(\huaV^\bullet\otimes \huaV^\bullet)),[\cdot,\cdot]_{\B})$.
Consequently, a  homotopy Nambu algebra $(\huaV^\bullet,\{\Oprn_{2k-1}\}_{k=1}^{+\infty})$ naturally gives rise to a Leibniz$_\infty$-algebra $(s(\huaV^\bullet\otimes \huaV^\bullet),\{\oprn_i\}_{i=1}^{+\infty})$, where $\oprn_i$ is given by \eqref{homotopy-nabum-to-homotopy-leibniz}.
\end{cor}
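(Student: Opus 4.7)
The plan is to observe that the homomorphism property is essentially built into the definition of $\Courant{\cdot,\cdot}$, and then invoke Theorem \ref{Nambu-graded-Lie} together with functoriality of Maurer–Cartan elements.

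First, I would unpack the definition of the bracket on $\CV^\bullet(\huaV^\bullet,\huaV^\bullet)$: by construction,
\[
\Courant{f,g}=\Psi^{-1}\bigl([\Psi(f),\Psi(g)]_{\B}\bigr),\qquad f\in\CV^m,\ g\in\CV^n.
\]
This expression is well-defined precisely because Theorem \ref{Nambu-graded-Lie} guarantees $[\Psi(f),\Psi(g)]_{\B}\in\Img\Psi$. Applying $\Psi$ to both sides immediately yields
\[
\Psi(\Courant{f,g})=[\Psi(f),\Psi(g)]_{\B},
\]
which is exactly the graded Lie algebra homomorphism property. No genuine calculation is needed beyond recalling the construction.

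For the second assertion, I would package the homotopy Nambu data as a single element $\Oprn:=\sum_{k\ge 1}\Oprn_{2k-1}\in\CV^1(\huaV^\bullet,\huaV^\bullet)$. By hypothesis $\Oprn$ satisfies the Maurer–Cartan equation $\Courant{\Oprn,\Oprn}=0$. Applying the homomorphism $\Psi$ transfers this to
\[
[\Psi(\Oprn),\Psi(\Oprn)]_{\B}=0
\]
in $\CLV^\bullet(s(\huaV^\bullet\otimes\huaV^\bullet),s(\huaV^\bullet\otimes\huaV^\bullet))$, i.e., $\Psi(\Oprn)$ is a Maurer–Cartan element of the graded Balavoine algebra, which by definition is a Leibniz$_\infty$-algebra structure on $s(\huaV^\bullet\otimes\huaV^\bullet)$. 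Finally, reading off the components from formula \eqref{3-Leibniz-to-Leibniz} with $n=1$ gives
\[
(\Psi(\Oprn_{2i-1}))(s\huaX_1,\dots,s\huaX_{i-1},s\huaX_i)=(-1)^{i(i-1)/2}\,s\,\oprn_i(\huaX_1,\dots,\huaX_i),
\]
so the $i$-ary bracket of this Leibniz$_\infty$-algebra is (up to the standard décalage sign $(-1)^{i(i-1)/2}$, which is absorbed in the passage between the unsuspended degree-$i$ operation and the suspended degree-$1$ operation) exactly $\oprn_i$ from \eqref{homotopy-nabum-to-homotopy-leibniz}.

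I do not expect any real obstacle: the entire argument is formal once Theorem \ref{Nambu-graded-Lie} is in hand. The only delicate point worth writing out is the sign bookkeeping in the last step, confirming that the $(-1)^{i(i-1)/2}$ factor appearing in $\Psi$ is precisely the suspension sign needed to match the conventions in the definition of a Leibniz$_\infty$-algebra, so that the $\oprn_i$'s as defined in \eqref{homotopy-nabum-to-homotopy-leibniz} assemble into a genuine Leibniz$_\infty$ structure on $s(\huaV^\bullet\otimes\huaV^\bullet)$.
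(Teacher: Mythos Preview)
Your proposal is correct and follows exactly the approach implicit in the paper: the corollary is stated there without proof, as it is immediate from the definition $\Courant{f,g}=\Psi^{-1}([\Psi(f),\Psi(g)]_{\B})$ together with Theorem~\ref{Nambu-graded-Lie}, and your write-up simply makes this explicit, including the transfer of Maurer--Cartan elements and the identification of the $\oprn_i$ via \eqref{3-Leibniz-to-Leibniz}.
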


A Lie triple system is just a Nambu algebra satisfying further compatibility conditions. From this perspective, we introduce the notion of a homotopy Lie triple system as follows.

\begin{defi}\label{Homotopy-lts1}
A {\bf homotopy Lie triple system} is a $\mathbb Z$-graded vector space $\huaT^\bullet=\oplus_{k\in\mathbb Z}\huaT^k$ equipped with a collection $(k\ge 1)$ of linear maps $\Oprn_{2k-1}:\otimes^{2k-1}\huaT^\bullet\lon\huaT^\bullet$ of degree $k$ such that for any homogeneous elements $\huaX_1,\cdots,\huaX_{n-1}\in \huaT^\bullet\otimes \huaT^\bullet, x\in \huaT^\bullet$, \eqref{eq:homNA} and the following equalities hold:
\begin{eqnarray*}
\Oprn_{2k-1}(\huaX_1,\cdots,\huaX_{k-2},x,x,y)=0,\\
\Oprn_{2k-1}(\huaX_1,\cdots,\huaX_{k-2},x,y,z)+(-1)^{x(y+z)}\Oprn_{2k-1}(\huaX_1,\cdots,\huaX_{k-2},y,z,x)\\
+(-1)^{(x+y)z}\Oprn_{2k-1}(\huaX_1,\cdots,\huaX_{k-2},z,x,y)=0.
\end{eqnarray*}

\end{defi}
 We denote a homotopy Lie triple system by $(\huaT^\bullet,\{\Oprn_{2k-1}\}_{k=1}^{+\infty})$.

\section{2-term homotopy Lie triple systems and Lie triple $2$-systems}\label{sec:equ}

In this section, we focus on 2-term homotopy Lie triple systems and introduce the notion of Lie triple $2$-systems, which are the categorification of Lie triple systems. We show that 2-term homotopy Lie triple systems and Lie triple $2$-systems are equivalent.

\subsection{2-term homotopy Lie triple systems}
For all $i\ge 1$, let $l_{2i-1}:\big(\otimes^{i-1}(\huaV^\bullet\otimes\huaV^\bullet)\big)\otimes \huaV^\bullet\lon\huaV^\bullet$ be a graded linear map of degree $2-i$. Define $D(l_{2i-1}):\big(\otimes^{i-1}(s^{-1}\huaV^\bullet\otimes s^{-1}\huaV^\bullet)\big)\otimes s^{-1}\huaV^\bullet\lon s^{-1}\huaV^\bullet$ by
\begin{eqnarray*}
D(l_{2i-1})=(-1)^{(2i-1)(i-1)}s^{-1}\circ l_{2i-1}\circ s^{\otimes 2i-1},
\end{eqnarray*}
which is a graded linear map of degree $i$, so that $D(l_{2i-1}) \in \CV^1(s^{-1}\huaV^\bullet,s^{-1}\huaV^\bullet)$. This defines an isomorphism called the d\'ecalage isomorphism. Using this isomorphism, we introduce another version of homotopy Lie triple systems, which is more closely related to the cohomology theory.

\begin{defi}\label{Homotopy-lts2}
A {\bf homotopy Lie triple system} is a $\mathbb Z$-graded vector space $\huaT^\bullet=\oplus_{k\in\mathbb Z}\huaT^k$ equipped with a collection $(k\ge 1)$ of linear maps $l_{2k-1}:\otimes^{2k-1}\huaT^\bullet\lon\huaT^\bullet$ of degree $2-k$ such that for any homogeneous elements $\huaX_1,\cdots,\huaX_{n-1}\in \huaT^\bullet\otimes \huaT^\bullet, x\in \huaT^\bullet$, the following equality holds:
\begin{eqnarray*}
l_{2k-1}(\huaX_1,\cdots,\huaX_{k-2},x,x,y)=0,\\
l_{2k-1}(\huaX_1,\cdots,\huaX_{k-2},x,y,z)+(-1)^{x(y+z)}l_{2k-1}(\huaX_1,\cdots,\huaX_{k-2},y,z,x)\\
+(-1)^{(x+y)z}l_{2k-1}(\huaX_1,\cdots,\huaX_{k-2},z,x,y)=0,
\end{eqnarray*}
\begin{eqnarray}\label{homotopy-lts2}
\nonumber&&\sum_{i=1}^{n}\sum_{k=1}^{n-i}\sum_{\sigma\in \mathbb S_{(k-1,i-1)} }(-1)^{\gamma_{k}}\sgn(\sigma)\varepsilon(\sigma)\\
\nonumber&&l_{2n-2i+1}(\huaX_{\sigma(1)},\cdots,\huaX_{\sigma(k-1)},\widetilde{l}_i(\huaX_{\sigma(k)},\cdots,\huaX_{\sigma(k+i-2)},\huaX_{k+i-1}),\huaX_{k+i},\cdots,\huaX_{n-1},x)\\
\nonumber&&+\sum_{i=1}^{n}\sum_{\sigma\in \mathbb S_{(n-i,i-1)} }(-1)^{\gamma_{n-i+1}}\sgn(\sigma)\varepsilon(\sigma)\\
\nonumber&&l_{2n-2i+1}(\huaX_{\sigma(1)},\cdots,\huaX_{\sigma(n-i)},l_{2i-1}(\huaX_{\sigma(n-i+1)},\cdots,\huaX_{\sigma(n-1)},x))=0,
\end{eqnarray}
where $\gamma_{k}=(2-i)(\huaX_{\sigma(1)}+\cdots+\huaX_{\sigma(k-1)}),~k=1,2,\cdots,n-i+1,$ and $\widetilde{l}_i$ is defined by
\begin{eqnarray}\label{homotopy-nabum-to-homotopy-leibniz2}
&&\widetilde{l}_i(\huaX_1,\cdots,\huaX_{i})\\
\nonumber &=&l_{2i-1}(\huaX_1,\cdots,\huaX_{i-1},x_i)\otimes y_i+(-1)^{x_i(\huaX_1+\cdots+\huaX_{i-1}+2-i)}x_i\otimes l_{2i-1}(\huaX_1,\cdots,\huaX_{i-1},y_i).
\end{eqnarray}

\end{defi}
 We denote a homotopy Lie triple system by $(\huaT^\bullet,\{l_{2k-1}\}_{k=1}^{+\infty})$.

\begin{lem}
Definition \ref{Homotopy-lts1} and Definition \ref{Homotopy-lts2} are equivalent.
\end{lem}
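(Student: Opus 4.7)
The plan is to exhibit an explicit bijection between the two data sets and verify that each axiom in Definition \ref{Homotopy-lts1} corresponds, term by term, to an axiom in Definition \ref{Homotopy-lts2} under this bijection. The bijection is of course the d\'ecalage isomorphism described just before Definition \ref{Homotopy-lts2}. Concretely, given $\{l_{2k-1}\}_{k\ge 1}$ of degrees $\{2-k\}$ on $\huaT^\bullet$, we set
\[
\Oprn_{2k-1} \;=\; s\circ D(l_{2k-1})\circ (s^{-1})^{\otimes 2k-1} \;=\; (-1)^{(2k-1)(k-1)}\,l_{2k-1}\circ (ss^{-1})^{\otimes 2k-1}
\]
up to sign bookkeeping, after transporting from $s^{-1}\huaT^\bullet$ back to $\huaT^\bullet$; symbolically, $\Oprn_{2k-1}$ has degree $(2-k)+(2k-1)-1 = k$ as required in Definition \ref{Homotopy-lts1}. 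I would thus work with the equivalent statement: $\{\Oprn_{2k-1}\}$ is a Maurer--Cartan element of $(\CV^\bullet(\huaT^\bullet,\huaT^\bullet),\Courant{\cdot,\cdot})$ if and only if $\{D(l_{2k-1})\}$ is, after the shift.

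First I would treat the pointwise ternary axioms, i.e.\ the two identities
\[
\Oprn_{2k-1}(\huaX_1,\dots,\huaX_{k-2},x,x,y)=0
\]
and the cyclic identity on the last three slots, together with their $l_{2k-1}$-counterparts carrying the extra Koszul signs $(-1)^{x(y+z)}$ and $(-1)^{(x+y)z}$. These are purely linear conditions in the last three arguments for each fixed placement of the bilinear arguments $\huaX_i$. Applying the suspension to the last three slots replaces $x,y,z$ by $s^{-1}x,s^{-1}y,s^{-1}z$ and produces exactly the Koszul signs that distinguish the two cyclic identities; the $x=x$ identity is invariant under suspension. This step is a short sign computation.

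The heart of the proof is the equivalence of the two big Maurer--Cartan identities \eqref{eq:homNA} and \eqref{homotopy-lts2}. Both identities have the same combinatorial structure: a sum over shuffles of two blocks, weighted by $\varepsilon(\sigma)$ together with a Koszul-type prefactor $(-1)^{\gamma_k}$. In Definition \ref{Homotopy-lts1} the prefactor uses the weight $i$ (matching the degree $k$ of $\Oprn_{2k-1}$), while in Definition \ref{Homotopy-lts2} it uses $2-i$ and carries an extra $\sgn(\sigma)$. I would check that under the d\'ecalage substitution $\Oprn_{2k-1}\leftrightarrow l_{2k-1}$, the factor $(-1)^{(2k-1)(k-1)}$ appearing in the definition of $D(l_{2k-1})$, combined with the signs from commuting $s^{\otimes 2k-1}$ past the arguments, precisely accounts for both the change $i\mapsto 2-i$ in the Koszul exponent and the appearance of $\sgn(\sigma)$. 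Concretely, each transposition required to bring the $s$'s to their standard position contributes a sign determined by the degrees, and shuffling $s^{\otimes j}$ across $j$ arguments costs $\sgn(\sigma)$. The verification for the inner-composition terms (those involving $\widetilde{l}_i$, respectively $\oprn_i$) follows from the analogous check in \eqref{homotopy-nabum-to-homotopy-leibniz} versus \eqref{homotopy-nabum-to-homotopy-leibniz2}: the sign $(-1)^{x_i(\huaX_1+\cdots+\huaX_{i-1}+i)}$ in the first version versus $(-1)^{x_i(\huaX_1+\cdots+\huaX_{i-1}+2-i)}$ in the second differs by $(-1)^{x_i\cdot(2-2i)}=+1$, so the two formulas genuinely agree after suspension, as they should.

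The main obstacle will be keeping the sign accounting uniform, because there are three independent sources of signs: (i) the intrinsic Koszul signs from permuting graded objects, (ii) the $\sgn(\sigma)$ coming from shuffling odd-shifted arguments, and (iii) the degree-shift constant $(-1)^{(2i-1)(i-1)}$ hidden inside $D$. The cleanest way to handle this, which I would adopt, is to invoke the d\'ecalage principle abstractly: the shift functor $s^{-1}$ induces an isomorphism of graded Lie algebras between a suitable version of $(\CV^\bullet,\Courant{\cdot,\cdot})$ built on $\huaT^\bullet$ and the one built on $s^{-1}\huaT^\bullet$, and under this isomorphism the Maurer--Cartan equation for $\sum l_{2k-1}$ is, by construction, the Maurer--Cartan equation for $\sum \Oprn_{2k-1}$. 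One then only has to verify by a single direct computation that the resulting expanded equation matches \eqref{homotopy-lts2} with the signs and $\sgn(\sigma)$ as stated, which is a finite check.
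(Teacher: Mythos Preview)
Your approach is essentially the same as the paper's: both invoke the d\'ecalage isomorphism $D(l_{2i-1})=(-1)^{(2i-1)(i-1)}s^{-1}\circ l_{2i-1}\circ s^{\otimes 2i-1}$ to pass between the two definitions, with the paper simply asserting ``it is easy to check'' where you spell out the sign bookkeeping. One small clarification: the paper does not transport $\Oprn_{2k-1}$ back to $\huaT^\bullet$ as your first displayed line suggests, but leaves it on $s^{-1}\huaT^\bullet$ (so the equivalence is between Definition~\ref{Homotopy-lts2} structures on $\huaT^\bullet$ and Definition~\ref{Homotopy-lts1} structures on $s^{-1}\huaT^\bullet$); your subsequent discussion handles this correctly.
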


\begin{proof}
Let $(\huaT^\bullet,\{l_{2k-1}\}_{k=1}^{+\infty})$ be a homotopy Lie triple system as in Definition \ref{Homotopy-lts2}. We define linear maps
$\Oprn_{2i-1}=D(l_{2i-1})=(-1)^{(2i-1)(i-1)}s^{-1}\circ l_{2i-1}\circ s^{\otimes 2i-1}$.
It is easy to check that $(s^{-1}\huaT^\bullet,\{\Oprn_{2k-1}\}_{k=1}^{+\infty})$ is a homotopy Lie triple system as in Definition \ref{Homotopy-lts1}.
\end{proof}

\begin{defi}\label{2termHomLTS}
A $2$-term homotopy Lie triple system $\huaT=(\huaT_0,\huaT_{-1},\dM,[\cdot,\cdot,\cdot],J)$, consists of the following data:
\begin{itemize}
\item[$\bullet$] a complex of vector spaces $\dM: \huaT_{-1}\rightarrow \huaT_0$
\item[$\bullet$] a trilinear map $[\cdot,\cdot,\cdot]: \huaT_i\times \huaT_j\times \huaT_k\rightarrow \huaT_{i+j+k}$, where $-1\leq i+j+k\leq 0$
\item[$\bullet$] a multilinear map $J: \huaT_0\times \huaT_0\times \huaT_0\times \huaT_0\times \huaT_0\rightarrow \huaT_{-1}$
\end{itemize}
such that for all $x,y,z,x_i\in \huaT_0$ and $f,g,h\in \huaT_{-1}$, the following equalities are satisfied:
\begin{itemize}
\item[$\rm(a)$] $\dM[x,y,f]=[x,y,\dM f]; \quad \dM[x,f,y]=[x,\dM f,y],$
\item[$\rm(b)$] $[\dM f,g,x]=[f,\dM g,x]; \quad [\dM f,x,g]=[f,x,\dM g],$
\item[$\rm(c)$] $[x,y,z]=-[y,x,z]; \quad [x,y,f]=-[y,x,f]; \quad [x,f,y]=-[f,x,y],$
\item[$\rm(d)$] $[x,y,z]+[y,z,x]+[z,x,y]=0,$
\item[$\rm(e)$] $[x,y,f]+[y,f,x]+[f,x,y]=0,$
\item[$\rm(f)$] $J(x_1,x_2,x,x,y)=0,$
\item[$\rm(g)$] $J(x_1,x_2,x,y,z)+J(x_1,x_2,y,z,x)+J(x_1,x_2,z,x,y)=0,$
\item[$\rm(h)$] $\dM J(x_1,x_2,x_3,x_4,x_5)\\=-[x_1,x_2,[x_3,x_4,x_5]]+[x_3,[x_1,x_2,x_4],x_5]+[[x_1,x_2,x_3],x_4,x_5]+[x_3,x_4,[x_1,x_2,x_5]],$
\item[$\rm(i)$] $J(\dM f,x_2,x_3,x_4,x_5)\\=-[f,x_2,[x_3,x_4,x_5]]+[x_3,[f,x_2,x_4],x_5]+[[f,x_2,x_3],x_4,x_5]+[x_3,x_4,[f,x_2,x_5]],$
\item[$\rm(j)$] $J(x_1,x_2,\dM f,x_4,x_5)\\=-[x_1,x_2,[f,x_4,x_5]]+[f,[x_1,x_2,x_4],x_5]+[[x_1,x_2,f],x_4,x_5]+[f,x_4,[x_1,x_2,x_5]],$
\item[$\rm(k)$] $J(x_1,x_2,x_3,x_4,\dM f)\\=-[x_1,x_2,[x_3,x_4,f]]+[x_3,[x_1,x_2,x_4],f]+[[x_1,x_2,x_3],x_4,f]+[x_3,x_4,[x_1,x_2,f]],$
\item[$\rm(l)$] $[J(x_1,x_2,x_3,x_4,x_5),x_6,x_7]+[x_5,J(x_1,x_2,x_3,x_4,x_6),x_7]+[x_1,x_2,J(x_3,x_4,x_5,x_6,x_7)]\\
+[x_5,x_6,J(x_1,x_2,x_3,x_4,x_7)]+J(x_1,x_2,[x_3,x_4,x_5],x_6,x_7)+J(x_1,x_2,x_5,[x_3,x_4,x_6],x_7)\\
+J(x_1,x_2,x_5,x_6,[x_3,x_4,x_7])=[x_3,x_4,J(x_1,x_2,x_5,x_6,x_7)]+J([x_1,x_2,x_3],x_4,x_5,x_6,x_7)\\
+J(x_3,[x_1,x_2,x_4],x_5,x_6,x_7)+J(x_3,x_4,[x_1,x_2,x_5],x_6,x_7)+J(x_3,x_4,x_5,[x_1,x_2,x_6],x_7)\\
+J(x_1,x_2,x_3,x_4,[x_5,x_6,x_7])+J(x_3,x_4,x_5,x_6,[x_1,x_2,x_7]).$
\end{itemize}
\end{defi}
Equations (a) and (b) tells us how the differential $\dM$ and the bracket $[\cdot,\cdot,\cdot]$ interact. Equations (h), (i), (j) and (k) tell us that the fundamental identity no longer holds on the nose, but controlled by $J$, Equation (l) gives the coherence law that $J$ should satisfy.

\begin{defi}\label{defi:Lwuqiong hom}
Let $\huaT=(\huaT_0,\huaT_{-1},\dM,[\cdot,\cdot,\cdot],J)$ and $\huaT'=(\huaT_0',\huaT_{-1}',\dM',[\cdot,\cdot,\cdot]',J')$ be $2$-term homotopy Lie triple systems. A {\bf  homomorphism} $\phi:\huaT \longrightarrow \huaT'$ consists of:
\begin{itemize}
\item[$\bullet$] a chain map $\phi:\huaT \longrightarrow \huaT'$, which consists of linear maps $\phi_0:\huaT_0 \longrightarrow \huaT_0'$ and $\phi_1:\huaT_{-1} \longrightarrow \huaT_{-1}'$ preserving the differential;
\item[$\bullet$] a trilinear map $\phi_2:\huaT_0 \times \huaT_0 \times \huaT_0 \longrightarrow \huaT_{-1}'$,
\end{itemize}
such that for all $x_i\in \huaT_0$ and $h\in \huaT_{-1}$, we have
\begin{eqnarray}
 \label{eq:homo1} \phi_2(x_1,x_1,x_2)&=&0,\\
 \label{eq:homo2} \phi_2(x_1,x_2,x_3)&=&-\phi_2(x_2,x_3,x_1)-\phi_2(x_3,x_1,x_2),\\
 \label{eq:homo3}\dM' (\phi_2(x_1,x_2,x_3))&=&\phi_0([x_1,x_2,x_3])-[\phi_0(x_1),\phi_0(x_2),\phi_0(x_3)]',\\
 \label{eq:homo4} \phi_2(x_1,x_2,\dM h)&=&\phi_1([x_1,x_2,h])-[\phi_0(x_1),\phi_0(x_2),\phi_1(h)]',\\
 \label{eq:homo5} \phi_2(x_1,\dM h,x_2)&=&\phi_1([x_1,h,x_2])-[\phi_0(x_1),\phi_1(h),\phi_0(x_2)]',
\end{eqnarray}
and
\begin{eqnarray}
    \label{eq:homo6}&&J'(\phi_0(x_1),\phi_0(x_2),\phi_0(x_3),\phi_0(x_4),\phi_0(x_5))+[\phi_2(x_1,x_2,x_3),\phi_0(x_4),\phi_0(x_5)]'\\
    \nonumber &+&[\phi_0(x_3),\phi_2(x_1,x_2,x_4),\phi_0(x_5)]'+[\phi_0(x_3),\phi_0(x_4),\phi_2(x_1,x_2,x_5)]'\\
    \nonumber &+&\phi_2([x_1,x_2,x_3],x_4,x_5)+\phi_2(x_3,[x_1,x_2,x_4],x_5)+\phi_2(x_3,x_4,[x_1,x_2,x_5])\\
    \nonumber &=&[\phi_0(x_1),\phi_0(x_2),\phi_2(x_3,x_4,x_5)]'+\phi_2(x_1,x_2,[x_3,x_4,x_5])+\phi_1(J(x_1,x_2,x_3,x_4,x_5)).
\end{eqnarray}
\end{defi}

Let $\varphi:\huaT \longrightarrow \huaT'$ and $\psi:\huaT' \longrightarrow \huaT''$ be  homomorphisms, their {\bf composition} $((\varphi\circ\psi)_0,(\varphi\circ\psi)_1,(\varphi\circ\psi)_2)$ is given by $(\varphi\circ\psi)_0=\varphi_0\circ\psi_0$,  $(\varphi\circ\psi)_1=\varphi_1\circ\psi_1$, and
 $$(\varphi\circ\psi)_2(x,y,z)=\psi_2(\varphi_0(x),\varphi_0(y),\varphi_0(z))+\psi_1(\varphi_2(x,y,z)).$$
  The {\bf identity homomorphism} $id_{\huaT}:\huaT\longrightarrow\huaT$ has the identity chain map as its underlying map, together with $(id_\huaT)_2=0$.

\begin{defi}\label{defi:Lwuqiong 2hom}
Let $\huaT$ and $\huaT'$ be $2$-term homotopy Lie triple systems, and $\varphi,\psi :\huaT \longrightarrow \huaT'$ be  homomorphisms. A  {\bf $2$-homomorphism} $\tau:\varphi\Rightarrow\psi$ is a chain homotopy such that for all $x_1,x_2,x_3\in \huaT_0 $, the following equation holds:
\begin{eqnarray}\label{2homoinfinity}
(\varphi_2-\psi_2)(x_1,x_2,x_3)&=&[\varphi_0(x_1),\varphi_0(x_2),\tau(x_3)]' +[\dM'\tau(x_1),\tau(x_2), \varphi_0(x_3)]'\\
\nonumber &&+[\dM'\tau(x_1),\dM'\tau(x_2),\tau(x_3)]'+c.p.-\tau([x_1,x_2,x_3]).
\end{eqnarray}
\end{defi}

Now we define the vertical and horizontal composition for these $2$-homomorphisms.
Let $\huaT$, $\huaT'$ be $2$-term homotopy Lie triple systems, and $\varphi,\psi,\mu:\huaT\longrightarrow \huaT'$ be homomorphisms. Let $\tau:\varphi\Rightarrow \psi$ and $\tau':\psi\Rightarrow \mu$ be $2$-homomorphisms.
The {\bf vertical composition} of $\tau$ and $\tau'$, denoted by $\tau'\tau$, is given by $\tau'\tau=\tau'+\tau$.

Let $\huaT$, $\huaT'$, $\huaT''$ be $2$-term homotopy Lie triple systems,   $\varphi,\psi,:\huaT\longrightarrow \huaT'$ and $\varphi',\psi':\huaT'\longrightarrow \huaT''$  be homomorphisms, and $\tau:\varphi\Rightarrow \psi$ and $\tau':\varphi'\Rightarrow \psi'$ be $2$-homomorphisms.
The {\bf horizontal composition} of $\tau$ and $\tau'$, denoted by $\tau'\circ\tau$, is given by $\tau'\circ\tau(x)=\tau'_{\varphi_0(x)}+\varphi_1'\tau(x)$.

Finally, given a homomorphism $\varphi$, the {\bf identity $2$-homomorphism} $1_{\varphi}:\varphi\Rightarrow \varphi$ is the zero chain homotopy  $1_{\varphi}(x)=0.$

It is straightforward to see that
\begin{pro}
 There is a  $2$-category \TTHL with $2$-term homotopy Lie triple systems as objects,  homomorphisms between $2$-term homotopy Lie triple systems as morphisms,   $2$-homomorphisms as $2$-morphisms.
\end{pro}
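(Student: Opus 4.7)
The plan is to verify the 2-category axioms one at a time: closure of 1-morphisms under composition with associativity and units; vertical composition of 2-morphisms making each hom-set into a category; horizontal composition with units and strict associativity; and the interchange law. Since all of these are equalities of (at most trilinear) maps into a 2-term complex, the entire verification is a bookkeeping exercise using the defining identities (a)--(l) of Definition \ref{defi:Lwuqiong hom} and the chain-homotopy relation together with \eqref{2homoinfinity}.

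First I would check that the composition $\psi\circ\varphi$ defined by $(\psi\circ\varphi)_0=\psi_0\circ\varphi_0$, $(\psi\circ\varphi)_1=\psi_1\circ\varphi_1$ and $(\psi\circ\varphi)_2(x,y,z)=\varphi_2(\psi_0(x),\psi_0(y),\psi_0(z))+\varphi_1(\psi_2(x,y,z))$ is again a 1-morphism. The chain-map condition and the skew-symmetry identities \eqref{eq:homo1}--\eqref{eq:homo2} are immediate. Identity \eqref{eq:homo3} follows by applying \eqref{eq:homo3} to $\varphi$ on $(\psi_0(x_i))$, applying \eqref{eq:homo3} to $\psi$ on $(x_i)$, and using $\dM''\circ\varphi_1=\varphi_0\circ\dM'$; identities \eqref{eq:homo4}--\eqref{eq:homo5} use the same pattern together with \eqref{eq:homo4}--\eqref{eq:homo5} on each factor. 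The real computation is the coherence relation \eqref{eq:homo6} for the composite: expanding both sides using the formula for $(\psi\circ\varphi)_2$, applying \eqref{eq:homo6} once for $\psi$ (to convert $J$ for $\huaT$ into $J$ for $\huaT'$ modulo $\psi_2$-terms) and then once for $\varphi$ (to go from $\huaT'$ to $\huaT''$), all $\varphi_2$ and $\psi_2$ cross-terms cancel after repeated use of \eqref{eq:homo3}--\eqref{eq:homo5}. This is the main computational obstacle. Associativity and the identity laws for 1-composition are routine consequences of the formula.

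Next I would show that vertical composition $\tau'\tau=\tau+\tau'$ of 2-morphisms $\tau:\varphi\Rightarrow\psi$ and $\tau':\psi\Rightarrow\mu$ yields another 2-morphism $\varphi\Rightarrow\mu$. Adding the two instances of \eqref{2homoinfinity}, the RHS for $\tau'$ is written in terms of $\psi_0$ rather than $\varphi_0$; replacing $\psi_0=\varphi_0+\dM'\tau$ (the chain-homotopy equation) and expanding the trilinear brackets, the extra cubic $\dM'\tau$ terms match the $[\dM'\tau(x_1),\dM'\tau(x_2),\tau'(x_3)]'+\text{c.p.}$ terms already present, giving precisely \eqref{2homoinfinity} for the composite. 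Associativity and the identity $2$-homomorphism $1_\varphi=0$ are obvious from $\tau+\tau'=\tau'+\tau$.

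Finally I would check horizontal composition. For $\tau:\varphi\Rightarrow\psi$ and $\tau':\varphi'\Rightarrow\psi'$, one verifies that $(\tau'\circ\tau)(x)=\tau'(\varphi_0(x))+\varphi_1'(\tau(x))$ is a chain homotopy from $\varphi'\circ\varphi$ to $\psi'\circ\psi$ (a one-line check using $\dM''\tau'=\psi_0'-\varphi_0'$ and $\dM''\varphi_1'=\varphi_0'\dM'$); the coherence \eqref{2homoinfinity} for $\tau'\circ\tau$ then follows from the two coherence identities for $\tau$ and $\tau'$ combined with \eqref{eq:homo3}--\eqref{eq:homo5} applied to $\varphi'$. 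Unit laws and the interchange law $(\tau_2'\cdot\tau_1')\circ(\tau_2\cdot\tau_1)=(\tau_2'\circ\tau_2)\cdot(\tau_1'\circ\tau_1)$ are immediate from the additive formulas for $\cdot$ and $\circ$, once one uses the chain-homotopy identity to rewrite $\varphi_1'\tau_1+\psi_1'\tau_2=\varphi_1'(\tau_1+\tau_2)$ up to a $\dM''$ term that matches the difference. The hardest step in this whole program is the coherence check in Step 1; all other items reduce to symbol-pushing in the 2-term complex.
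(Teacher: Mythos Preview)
The paper itself offers no proof beyond ``It is straightforward to see that,'' so your detailed verification plan already goes well beyond what the paper provides, and it is the only reasonable approach: one must check the 2-category axioms directly from the defining identities. Your outline is essentially correct in spirit, with two issues worth flagging.

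First, a minor notational slip: in your formula for $(\psi\circ\varphi)_2$ you have interchanged the roles of $\varphi$ and $\psi$ relative to the paper's conventions (compare the composition formula stated just before Definition~\ref{defi:Lwuqiong 2hom}). This does not affect the argument.

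Second, and more substantively, your treatment of the interchange law is too breezy, and in fact the paper's stated horizontal-composition formula $(\tau'\circ\tau)(x)=\tau'(\varphi_0(x))+\varphi_1'(\tau(x))$ does \emph{not} make interchange hold strictly. A direct computation with $\tau_1:\varphi\Rightarrow\psi$, $\tau_2:\psi\Rightarrow\mu$, $\tau_1':\varphi'\Rightarrow\psi'$, $\tau_2':\psi'\Rightarrow\mu'$ gives
\[
\big((\tau_2'\cdot\tau_1')\circ(\tau_2\cdot\tau_1)\big)(x)-\big((\tau_2'\circ\tau_2)\cdot(\tau_1'\circ\tau_1)\big)(x)
=-\tau_2'\big(\dM'\tau_1(x)\big)-\tau_1'\big(\dM'\tau_2(x)\big),
\]
which is neither zero in general nor a $\dM''$-exact correction as you suggest. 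What is actually needed is one of the two equivalent whiskering formulas, e.g.\ $(\tau'\circ\tau)(x)=\tau'(\psi_0(x))+\varphi_1'(\tau(x))$, or equivalently $\tau'(\varphi_0(x))+\psi_1'(\tau(x))$; these two agree by the chain-homotopy relations $\psi_0-\varphi_0=\dM'\tau$ and $\psi_1'-\varphi_1'=\tau'\dM'$, and with either of them the interchange law holds on the nose. This is precisely how the analogous 2-category is built in the Baez--Crans setting cited by the paper, and is almost certainly what is intended here. So the gap in your plan is real but easily repaired once the correct horizontal composition is used; the rest of your outline (closure under 1-composition, coherence \eqref{eq:homo6} for composites, vertical composition) is sound.
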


\subsection{Lie triple 2-systems}\label{lt2s}

In this subsection, we define Lie triple 2-systems, which are the categorification of Lie triple systems, and show that the 2-category of Lie triple 2-systems is equivalent to the 2-category of 2-term homotopy Lie triple systems.

\begin{defi}\label{defi:lie32sys}
A Lie triple $2$-system consists of:
\begin{itemize}
\item[$\bullet$] a $2$-vector spaces $L$;
\item[$\bullet$] a trilinear functor, the $\textbf{bracket}$, $\{\cdot,\cdot,\cdot\}: L\times L\times L\longrightarrow L$, such that for all $x_i\in L$, we have
\begin{eqnarray*}
 \{x_1,x_1,x_2\}&=&0,\\
 \{x_1,x_2,x_3\}+\{x_2,x_3,x_1\}+\{x_3,x_1,x_2\}&=&0;
\end{eqnarray*}
\item[$\bullet$] a multilinear natural isomorphism $\huaJ_{x_1,x_2,x_3,x_4,x_5}$ for all $x_i\in L_0$,
$$\{x_1,x_2,\{x_3,x_4,x_5\}\}\stackrel{\huaJ_{x_1,x_2,x_3,x_4,x_5}}{\longrightarrow }\{\{x_1,x_2,x_3\},x_4,x_5\}+\{x_3,\{x_1,x_2,x_4\},x_5\}+\{x_3,x_4,\{x_1,x_2,x_5\}\},$$
\end{itemize}
such that for all $x_1,\ldots,x_7\in L_0$, the following fundamental identity holds:
\begin{equation}\label{Jacobiator indentity}
\{x_1,x_2,\huaJ_{x_3,x_4,x_5,x_6,x_7}\}(\huaJ_{x_1,x_2,\{x_3,x_4,x_5\},x_6,x_7}+\huaJ_{x_1,x_2,x_5,\{x_3,x_4,x_6\},x_7}+\huaJ_{x_1,x_2,x_5,x_6,\{x_3,x_4,x_7\}})
\end{equation}
$$(\{x_5,x_6,\huaJ_{x_1,x_2,x_3,x_4,x_7}\}+1)(\{x_5,\huaJ_{x_1,x_2,x_3,x_4,x_6},x_7\}+\{\huaJ_{x_1,x_2,x_3,x_4,x_5},x_6,x_7\}+1)=$$
$$\huaJ_{x_1,x_2,x_3,x_4,\{x_5,x_6,x_7\}}(\{x_3,x_4,\huaJ_{x_1,x_2,x_5,x_6,x_7}\}+1)(\huaJ_{\{x_1,x_2,x_3\},x_4,x_5,x_6,x_7}+\huaJ_{x_3,\{x_1,x_2,x_4\},x_5,x_6,x_7}+$$
$$\huaJ_{x_3,x_4,\{x_1,x_2,x_5\},x_6,x_7}+\huaJ_{x_3,x_4,x_5,\{x_1,x_2,x_6\},x_7}+\huaJ_{x_3,x_4,x_5,x_6,\{x_1,x_2,x_7\}}),$$
or, in terms of a commutative diagram,
$$
\xymatrix{&\{x_1,x_2,\{x_3,x_4,\{x_5,x_6,x_7\}\}\}\ar[dr]^-{\huaJ_{x_1,x_2,x_3,x_4,\{x_5,x_6,x_7\}}}\ar[dl]_-{\{x_1,x_2,\huaJ_{x_3,x_4,x_5,x_6,x_7}\}}&\\
A \ar[d]_{\alpha}&& B\ar[dd]^{\beta}\\
C \ar[d]_{\gamma}&& \\
D \ar[dr]_{\delta}&& E \ar[dl]^{\eta}\\
&F}
$$
where $A,B,C,D,E,F$ and $\alpha,\beta,\gamma,\delta,\eta$ are given by
{\footnotesize
\begin{eqnarray*}
A&=&\{x_1,x_2,\{\{x_3,x_4,x_5\},x_6,x_7\}\}+\{x_1,x_2,\{x_5,\{x_3,x_4,x_6\},x_7\}\}+\{x_1,x_2,\{x_5,x_6,\{x_3,x_4,x_7\}\}\},\\
B&=&\{x_3,x_4,\{x_1,x_2,\{x_5,x_6,x_7\}\}\}+\{\{x_1,x_2,x_3\},x_4,\{x_5,x_6,x_7\}\}+\{x_3,\{x_1,x_2,x_4\},\{x_5,x_6,x_7\}\},\\
C&=&\{\{x_1,x_2,\{x_3,x_4,x_5\}\},x_6,x_7\}+\{\{x_3,x_4,x_5\},\{x_1,x_2,x_6\},x_7\}+\{\{x_3,x_4,x_5\},x_6,\{x_1,x_2,x_7\}\}\\
&&+\{\{x_1,x_2,x_5\},\{x_3,x_4,x_6\},x_7\}+\{x_5,\{x_1,x_2,\{x_3,x_4,x_6\}\},x_7\}+\{x_5,\{x_3,x_4,x_6\},\{x_1,x_2,x_7\}\}\\
&&+\{\{x_1,x_2,x_5\},x_6,\{x_3,x_4,x_7\}\}+\{x_5,\{x_1,x_2,x_6\},\{x_3,x_4,x_7\}\}+\{x_5,x_6,\{x_1,x_2,\{x_3,x_4,x_7\}\}\},\\
D&=&\{\{x_1,x_2,\{x_3,x_4,x_5\}\},x_6,x_7\}+\{\{x_3,x_4,x_5\},\{x_1,x_2,x_6\},x_7\}+\{\{x_3,x_4,x_5\},x_6,\{x_1,x_2,x_7\}\}\\
&&+\{\{x_1,x_2,x_5\},\{x_3,x_4,x_6\},x_7\}+\{x_5,\{x_1,x_2,\{x_3,x_4,x_6\}\},x_7\}+\{x_5,\{x_3,x_4,x_6\},\{x_1,x_2,x_7\}\}\\
&&+\{\{x_1,x_2,x_5\},x_6,\{x_3,x_4,x_7\}\}+\{x_5,\{x_1,x_2,x_6\},\{x_3,x_4,x_7\}\}+\{x_5,x_6,\{\{x_1,x_2,x_3\},x_4,x_7\}\}\\
&&+\{x_5,x_6,\{x_3,\{x_1,x_2,x_4\},x_7\}\}+\{x_5,x_6,\{x_3,x_4,\{x_1,x_2,x_7\}\}\},\\
E&=&\{\{x_1,x_2,x_3\},x_4,\{x_5,x_6,x_7\}\}+\{x_3,\{x_1,x_2,x_4\},\{x_5,x_6,x_7\}\}+\{x_3,x_4,\{\{x_1,x_2,x_5\},x_6,x_7\}\}\\
&&+\{x_3,x_4,\{x_5,\{x_1,x_2,x_6\},x_7\}\}+\{x_3,x_4,\{x_5,x_6,\{x_1,x_2,x_7\}\}\},\\
F&=&\{\{\{x_1,x_2,x_3\},x_4,x_5\},x_6,x_7\}+\{\{x_3,\{x_1,x_2,x_4\},x_5\},x_6,x_7\}+\{\{x_3,x_4,\{x_1,x_2,x_5\}\},x_6,x_7\}\\
&&+\{x_5,\{\{x_1,x_2,x_3\},x_4,x_6\},x_7\}+\{x_5,\{x_3,\{x_1,x_2,x_4\},x_6\},x_7\}+\{x_5,\{x_3,x_4,\{x_1,x_2,x_6\}\},x_7\}\\
&&+\{\{x_3,x_4,x_5\},\{x_1,x_2,x_6\},x_7\}+\{\{x_3,x_4,x_5\},x_6,\{x_1,x_2,x_7\}\}+\{\{x_1,x_2,x_5\},\{x_3,x_4,x_6\},x_7\}\\
&&+\{x_5,\{x_3,x_4,x_6\},\{x_1,x_2,x_7\}\}+\{\{x_1,x_2,x_5\},x_6,\{x_3,x_4,x_7\}\}+\{x_5,\{x_1,x_2,x_6\},\{x_3,x_4,x_7\}\}\\
&&+\{x_5,x_6,\{\{x_1,x_2,x_3\},x_4,x_7\}\}+\{x_5,x_6,\{x_3,\{x_1,x_2,x_4\},x_7\}\}+\{x_5,x_6,\{x_3,x_4,\{x_1,x_2,x_7\}\}\},\\
\alpha&=&\huaJ_{x_1,x_2,\{x_3,x_4,x_5\},x_6,x_7}+\huaJ_{x_1,x_2,x_5,\{x_3,x_4,x_6\},x_7}+\huaJ_{x_1,x_2,x_5,x_6,\{x_3,x_4,x_7\}},\\
\beta&=&\{x_3,x_4,\huaJ_{x_1,x_2,x_5,x_6,x_7}\}+1,\\
\gamma&=&\{x_5,x_6,\huaJ_{x_1,x_2,x_3,x_4,x_7}\}+1,\\
\delta&=&\{x_5,\huaJ_{x_1,x_2,x_3,x_4,x_6},x_7\}+\{\huaJ_{x_1,x_2,x_3,x_4,x_5},x_6,x_7\}+1,\\
\eta&=&\huaJ_{\{x_1,x_2,x_3\},x_4,x_5,x_6,x_7}+\huaJ_{x_3,\{x_1,x_2,x_4\},x_5,x_6,x_7}+\huaJ_{x_3,x_4,\{x_1,x_2,x_5\},x_6,x_7}\\
&&+\huaJ_{x_3,x_4,x_5,\{x_1,x_2,x_6\},x_7}+\huaJ_{x_3,x_4,x_5,x_6,\{x_1,x_2,x_7\}}.
\end{eqnarray*}
}
\end{defi}

We continue by setting up a $2$-category of Lie triple $2$-systems.

\begin{defi}\label{defi:lthomo}
  Given Lie triple $2$-systems $L$ and $L'$, a {\bf homomorphism} $F:L\longrightarrow L'$ consists of:
\begin{itemize}
\item[$\bullet$] A linear functor $F$ from the underlying $2$-vector space of $L$ to that of $L',$ and

\item[$\bullet$] a trilinear natural transformation
 $$F_2(x,y,z):\{F_0(x),F_0(y),F_0(z)\}'\longrightarrow F_0\{x,y,z\}$$
 such that for all $x, y, z, x_i \in L_0$, we have
\begin{eqnarray*}
 F_2(x,x,y)&=&0,\\
 F_2(x,y,z)&=&-F_2(y,z,x)-F_2(z,x,y);
\end{eqnarray*}
 and the following diagram commutes:
$$
\xymatrix{\{F_0(x_1),F_0(x_2),\{F_0(x_3),F_0(x_4),F_0(x_5)\}'\}'\ar[d]^{\{1,1,F_2\}}\ar[rrrr]^{\qquad \qquad \qquad \qquad \qquad \qquad \qquad \qquad \qquad \huaJ'_{F_0(x_1),F_0(x_2),F_0(x_3),F_0(x_4),F_0(x_5)}\qquad \qquad \qquad \qquad \qquad}&&&& A \ar[d]_{\{F_2,1,1\}+\{1,F_2,1\}+\{1,1,F_2\}}\\
\{F_0(x_1),F_0(x_2),F_0\{x_3,x_4,x_5\}\}'\ar[d]^{F_2}&&&&
B \ar[d]_{F_2+F_2+F_2}\\
F_0\{x_1,x_2,\{x_3,x_4,x_5\}\}\ar[rrrr]^{\qquad F_1(\huaJ_{x_1,x_2,x_3,x_4,x_5})}&&&& C,}
$$
where $A,B,C$ are given by
\begin{eqnarray*}
A&=&\{\{F_0(x_1),F_0(x_2),F_0(x_3)\}',F_0(x_4),F_0(x_5)\}'+\{F_0(x_3),\{F_0(x_1),F_0(x_2),F_0(x_4)\}',F_0(x_5)\}'\\
&&+\{F_0(x_3),F_0(x_4),\{F_0(x_1),F_0(x_2),F_0(x_5)\}'\}',\\
B&=&\{F_0\{x_1,x_2,x_3\},F_0(x_4),F_0(x_5)\}'+\{F_0(x_3),F_0\{x_1,x_2,x_4\},F_0(x_5)\}'\\
&&+\{F_0(x_3),F_0(x_4),F_0\{x_1,x_2,x_5\}\}',\\
C&=&F_0\{\{x_1,x_2,x_3\},x_4,x_5\}+F_0\{x_3,\{x_1,x_2,x_4\},x_5\}+F_0\{x_3,x_4,\{x_1,x_2,x_5\}\}.
\end{eqnarray*}
\end{itemize}
\end{defi}

The identity homomorphism $\Id_L:L\longrightarrow L$ has the identity functor as its underlying functor, together with an identity natural transformation as $(\Id_L)_2.$ Let $L,L'$ and $L''$ be Lie triple $2$-systems, the composite of homomorphisms $F:L\longrightarrow L'$ and $G:L'\longrightarrow L''$ which we denote by $G\circ F$, is given by letting the functor $((G\circ F)_0,(G\circ F)_1)$ be the usual composition of $(G_0,G_1)$ and $(F_0,F_1)$, and letting $(G\circ F)_2$ be the following composite:

$$\xymatrix{
  \{G_0\circ F_0(x),G_0\circ F_0(y),G_0\circ F_0(z)\}'' \ar[dd]_{G_2(F_0(x),F_0(y),F_0(z))} \ar[dr]^{\ \ \ \ (G\circ F)_2(x,y,z)} \\& G_0\circ F_0\{x,y,z\} .  \\
  G_0\{F_0(x),F_0(y),F_0(z)\}'  \ar[ur]_{G_1(F_2(x,y,z))}                     }
$$

We also have $2$-homomorphisms between homomorphisms:

\begin{defi}\label{defi:liet2homo}
Let $F,G:L\longrightarrow L'$ be homomorphisms. A {\bf $2$-homomorphism} $\theta:F\Rightarrow G$ is a linear natural transformation from $F$ to $G$ such that the following diagram commutes:
$$
\xymatrix{\{F_0(x),F_0(y),F_0(z)\}'\ar[d]^{\{\theta_x,\theta_y,\theta_z\}'}\ar[rr]^-{F_2}&&F_0\{x,y,z\}\ar[d]_{\theta_{\{x,y,z\}}}\\
\{G_0(x),G_0(y),G_0(z)\}'\ar[rr]^-{G_2}&&
G_0\{x,y,z\}.}
$$
\end{defi}

Since $2$-homomorphisms are just natural transformations with an extra property, we vertically and horizontally compose these in the usual way, and an identity $2$-homomorphism is just an identity natural transformation.

It is straightforward to see that

\begin{pro}
There is a   $2$-category   \LTTS with   Lie triple $2$-systems as objects, homomorphisms  as morphisms, and $2$-homomorphisms as $2$-morphisms.
\end{pro}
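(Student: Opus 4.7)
The plan is to verify that the data and compositions described just above the proposition satisfy the axioms of a strict $2$-category. Since a linear functor composes with a linear functor to give a linear functor, and natural transformations compose vertically and horizontally as in $\mathbf{Cat}$, the underlying unenriched $2$-categorical structure is automatic. What needs genuine checking is that (i) the composite $G\circ F$ of two homomorphisms of Lie triple $2$-systems is again a homomorphism, (ii) the identity $\Id_L$ is a homomorphism, (iii) the vertical and horizontal composites of $2$-homomorphisms are again $2$-homomorphisms, and (iv) the interchange law holds.

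For (i), I would take $F:L\to L'$ and $G:L'\to L''$ as in Definition \ref{defi:lthomo}, define $(G\circ F)_2$ as the pasting
$G_1(F_2(x,y,z))\circ G_2(F_0(x),F_0(y),F_0(z))$ displayed after that definition, and check the two symmetry conditions ($(G\circ F)_2(x,x,y)=0$ and the cyclic sum) — these follow immediately from the corresponding identities for $F_2$ and $G_2$ combined with linearity of $G_1$. The substantive check is the pentagon-like coherence diagram relating $(G\circ F)_2$ to $\huaJ''$ and $\huaJ$. The strategy is the usual pasting argument: subdivide the big hexagon for $G\circ F$ into two copies of the coherence hexagon (one for $F$, one for $G$) plus a collection of naturality squares coming from $F_2$ and $G_2$ applied in the various slots of the bracket, and show that the composite pastes to the desired diagram. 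This uses only the trilinearity/naturality of $F_2,G_2$ and functoriality of $G_1$ on $2$-cells.

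For (ii), the identity homomorphism has $(\Id_L)_2$ equal to the identity natural isomorphism; its defining diagrams degenerate to identities, so it is trivially a homomorphism, and the associativity of $1$-morphism composition reduces to the strict associativity of functor composition together with the obvious equality of the two pastings defining $(H\circ G)\circ F)_2$ and $(H\circ(G\circ F))_2$. For (iii), given a $2$-homomorphism $\theta:F\Rightarrow G$ and $\theta':G\Rightarrow H$, the vertical composite $\theta'\theta$ is the vertical composite of natural transformations; commutativity of the square in Definition \ref{defi:liet2homo} for $\theta'\theta$ is obtained by stacking the squares for $\theta$ and $\theta'$. For horizontal composition, given $\theta:F\Rightarrow G$ with $F,G:L\to L'$ and $\theta':F'\Rightarrow G'$ with $F',G':L'\to L''$, the horizontal composite $\theta'\circ\theta$ has components $\theta'_{G_0(x)}\circ F'_1(\theta_x)=G'_1(\theta_x)\circ\theta'_{F_0(x)}$, and its compatibility with $(F'\circ F)_2$ and $(G'\circ G)_2$ follows by juxtaposing the $\theta$-square (post-composed with $F'$ or $G'$) with the $\theta'$-square (evaluated at $F_0$ or $G_0$), together with the naturality of $\theta'$ applied to the components of $F_2$.

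For (iv), the middle-four interchange law is inherited from the interchange law in $\mathbf{Cat}$ because both vertical and horizontal composition on $2$-homomorphisms agree with the underlying compositions of natural transformations; one only needs to verify that the compatibility square with the bracket is preserved, and this is automatic as the square is being composed in each variable separately. I expect the main obstacle to be bookkeeping in the coherence hexagon for the composite homomorphism in (i): one must be careful about the order in which $\huaJ'$ is transported through $G_2$ (in each of its three slots) and through $F_2$ (once), and in which the naturality squares for $G_2$ applied to $F_2$-components appear. Writing the pasting in two halves — first rewriting all $\{F_0,F_0,F_0\}'$-subexpressions via $F_2$, then the hexagon for $G$, then using $G_1$ on the hexagon for $F$ — makes the diagram chase routine, and the symmetry conditions on $F_2,G_2$ ensure that the cyclic and alternating requirements are preserved at every stage.
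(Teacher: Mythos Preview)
Your outline is correct and is precisely the routine verification the paper has in mind; the paper itself omits the proof entirely, prefacing the proposition only with ``It is straightforward to see that''. Your decomposition into (i)--(iv) and the pasting argument for the coherence hexagon of $G\circ F$ are the standard moves, so there is nothing to compare.
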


\subsection{The equivalence between 2-term homotopy Lie triple systems and Lie triple $2$-systems}

Now we  establish the equivalence between the 2-category of Lie triple $2$-systems and that of $2$-term homotopy Lie triple systems. This result is based on the equivalence between $2$-vector spaces and $2$-term chain complexes described in \cite{baez:2algebras}.

\begin{thm}\label{thm:equivalence}
The $2$-categories \TTHL and \LTTS are $2$-equivalent.
\end{thm}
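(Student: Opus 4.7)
The plan is to build on the Baez--Crans equivalence between $2$-vector spaces and $2$-term chain complexes of vector spaces \cite{baez:2algebras}. Under this equivalence, a $2$-vector space $L$ is sent to the complex $\dM:\huaT_{-1}\to\huaT_0$, where $\huaT_0=L_0$, $\huaT_{-1}=\{f\in L_1\mid s(f)=0\}$ and $\dM=t|_{\huaT_{-1}}$; conversely, a $2$-term complex is sent to the $2$-vector space with object set $\huaT_0$ and arrow set $\huaT_0\oplus\huaT_{-1}$ (source $x+h\mapsto x$, target $x+h\mapsto x+\dM h$). I will extend this to $2$-functors $T:\LTTS\to \TTHL$ and $S:\TTHL\to\LTTS$ and exhibit natural isomorphisms $S\circ T\cong \Id_{\TTHL}$ and $T\circ S\cong\Id_{\LTTS}$.

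First I will construct $T$ on objects. Given a Lie triple $2$-system $(L,\{\cdot,\cdot,\cdot\},\huaJ)$, the trilinear functor $\{\cdot,\cdot,\cdot\}:L\times L\times L\to L$ transports along the equivalence to a chain map $[\cdot,\cdot,\cdot]:\huaT_\bullet\otimes\huaT_\bullet\otimes\huaT_\bullet\to\huaT_\bullet$ of degree zero. On degree-zero triples it is the restriction of $\{\cdot,\cdot,\cdot\}$ to $L_0$, and for the mixed terms (one or two arguments in $\huaT_{-1}$) it is extracted from the action of the functor on morphisms by the standard trick $[x,y,h]=\{1_x,1_y,h\}-1_{\{x,y,\dM h\}}$ etc. Functoriality gives axioms (a) and (b); trilinearity plus the identities $\{x,x,y\}=0$ and the cyclic sum imply (c), (d), (e). For the higher coherence, the natural isomorphism $\huaJ_{x_1,\dots,x_5}$ is, at each quintuple of objects, an arrow in $L$ from its source to its target; subtracting the identity component yields a multilinear map $J:\huaT_0^{\times 5}\to\huaT_{-1}$. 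The fact that $\huaJ$ goes from $\{x_1,x_2,\{x_3,x_4,x_5\}\}$ to the expected right-hand side is exactly condition (h). Naturality of $\huaJ$ in each of its five slots (applied to an arrow $1_x+h\to 1_x+h$ for $h\in\huaT_{-1}$, equivalently an arrow from $x$ to $x+\dM h$) translates to axioms (i), (j), (k). Conditions (f) and (g) are the strict relations $\{x,x,y\}=0$ and cyclic anti-symmetry, which force the corresponding identities on $J$. The main obstacle is axiom (l): I will unpack the large commutative pentagon-type diagram in Definition \ref{defi:lie32sys} and verify that, after subtracting the identity summands on each arrow and using the correspondence $\huaJ\leftrightarrow J$, it becomes exactly the equation (l) in Definition \ref{2termHomLTS}. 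This is bookkeeping, but it is the only step with genuine combinatorial content.

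For $T$ on morphisms and $2$-morphisms, a homomorphism $F:L\to L'$ gives a chain map $(\phi_0,\phi_1)$ via the equivalence, and the natural transformation $F_2(x,y,z):\{F_0x,F_0y,F_0z\}'\to F_0\{x,y,z\}$ has the form $1+\phi_2(x,y,z)$ with $\phi_2:\huaT_0^{\times 3}\to\huaT_{-1}'$; equations \eqref{eq:homo1}--\eqref{eq:homo3} come from the imposed properties of $F_2$, while \eqref{eq:homo4}, \eqref{eq:homo5} are the naturality of $F_2$, and \eqref{eq:homo6} is the commuting hexagon in Definition \ref{defi:lthomo}. A $2$-homomorphism $\theta:F\Rightarrow G$ has components $\theta_x=1_{F_0x}+\tau(x)$ with $\tau:\huaT_0\to\huaT_{-1}'$; the naturality square gives that $\tau$ is a chain homotopy with $\phi_0-\psi_0=\dM'\tau$, and the hexagon in Definition \ref{defi:liet2homo} yields exactly \eqref{2homoinfinity}. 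Composition (vertical/horizontal) and identities are preserved by construction, so $T$ is a strict $2$-functor.

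Finally, I will define $S$ by reversing each of the assignments above: on a $2$-term homotopy Lie triple system $\huaT$, take $L_0=\huaT_0$ and $L_1=\huaT_0\oplus\huaT_{-1}$ with the standard $2$-vector space structure; define $\{\cdot,\cdot,\cdot\}$ on objects by $[\cdot,\cdot,\cdot]$ and on morphisms by the formula forced by trilinearity and axioms (a)--(e); set $\huaJ_{x_1,\dots,x_5}=1+J(x_1,\dots,x_5)$ and use (h)--(k) to see this is a well-defined natural transformation. Axiom (l) then implies the fundamental identity \eqref{Jacobiator indentity}. Functoriality of $S$ on $1$- and $2$-morphisms is parallel. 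The natural isomorphisms $S\circ T\cong \Id$ and $T\circ S\cong\Id$ are those already present in the Baez--Crans equivalence of underlying $2$-vector spaces/$2$-term complexes, and one checks they are compatible with the bracket and Jacobiator data because both structures on the two sides were defined by the same formulas. Assembling these pieces yields the $2$-equivalence.
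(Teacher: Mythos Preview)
Your proposal is correct and follows essentially the same route as the paper, modulo the cosmetic fact that you have swapped the names of the two $2$-functors (your $T$ is the paper's $S$ and vice versa). One small slip: for $h\in\ker(s)$ the arrow $\{1_x,1_y,h\}$ already has source $0$, so the correct extraction is $[x,y,h]=\{1_x,1_y,h\}$ without subtracting $1_{\{x,y,\dM h\}}$; this does not affect the overall argument.
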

\begin{proof}
First we construct a 2-functor $T:$ \TTHL $\longrightarrow$ \LTTS. Given a $2$-term homotopy Lie triple system $\huaT=(\huaT_0,\huaT_{-1},\dM,[\cdot,\cdot,\cdot],J)$, we have a $2$-vector space $L=(L_0,L_1,s,t,i,\circ)$ with $L_0=\huaT_0, L_1=\huaT_0\oplus \huaT_{-1}$, and the source and the target map are given by $s(x+f)=x$ and $t(x+f)=x+\dM f$. Define
  a trilinear functor $\{\cdot,\cdot,\cdot\}:L\times L\times L\longrightarrow L$ by
  \begin{eqnarray*}
  \{x+f,y+g,z+h\}&=&[x,y,z]+[x,y,h]+[x,g,z]+[f,y,z]\\
  &&+[\dM f,g,z]+[\dM f,y,h]+[x,\dM g,h]+[\dM f,\dM g,h],
  \end{eqnarray*}
and define the fundamentor $\huaJ_{x_1,x_2,x_3,x_4,x_5}$ by
$$\huaJ_{x_1,x_2,x_3,x_4,x_5}=([x_1,x_2,[x_3,x_4,x_5]],J(x_1,x_2,x_3,x_4,x_5)).$$
The source of $\huaJ_{x_1,x_2,x_3,x_4,x_5}$ is $[x_1,x_2,[x_3,x_4,x_5]]$ as desired. By (h) in the Definition \ref{2termHomLTS}, its target is
\begin{eqnarray*}
t(\huaJ_{x_1,x_2,x_3,x_4,x_5})&=&[x_1,x_2,[x_3,x_4,x_5]]+\dM J(x_1,x_2,x_3,x_4,x_5)\\
&=&[[x_1,x_2,x_3],x_4,x_5]+[x_3,[x_1,x_2,x_4],x_5]+[x_3,x_4,[x_1,x_2,x_5]],
\end{eqnarray*}
 as desired.
 By Conditions (i), (j) and (k) in Definition \ref{2termHomLTS}, we deduce that $\huaJ_{x_1,x_2,x_3,x_4,x_5}$ is a natural transformation.
By Condition (l) in Definition \ref{2termHomLTS}, we can deduce that the fundamentor identity holds.
This completes the construction of a Lie triple $2$-system $L=T(\huaT)$ from a $2$-term homotopy Lie triple system $\huaT.$

We go on to construct a Lie triple $2$-system homomorphism $T(\phi):T(\huaT)\longrightarrow T(\huaT')$ from a homotopy Lie triple system homomorphism $\phi=(\phi_0,\phi_1,\phi_2):\huaT\longrightarrow \huaT'$ between $2$-term homotopy Lie triple systems.
Let $T(\huaT)=L$ and $T(\huaT')=L'.$ We define the underlying linear functor of $T(\phi)=F$ with $F_0=\phi_0$, $F_1=\phi_0\oplus\phi_1.$ Define $F_2:\huaT_0\times \huaT_0\times \huaT_0\longrightarrow \huaT_0'\oplus \huaT_{-1}'$ by
$$F_2(x_1,x_2,x_3)=([\phi_0(x_1),\phi_0(x_2),\phi_0(x_3)]',\phi_2(x_1,x_2,x_3)).$$
Then $F_2(x_1,x_2,x_3)$ is a natural isomorphism from $[F_0(x_1),F_0(x_2),F_0(x_3)]'$ to $F_0[x_1,x_2,x_3]$, and $F=(F_0,F_1,F_2)$ is a homomorphism from $L$ to $L'$.
We can also prove that $T$ preserve identities and composition of homomorphisms. So $T$ is a functor.

Furthermore, to construct $T$ to be a 2-functor, we only need to define $T$ on 2-morphisms. Let $\varphi,\psi:\huaT\longrightarrow\huaT'$ be homomorphisms and $\tau:\varphi\Rightarrow\psi$ a $2$-homomorphism. Then we define  $$T(\tau)(x)=(\varphi_0(x),\tau(x)).$$ By \eqref{2homoinfinity}, $T(\tau)$ is a $2$-homomorphism from $T(\phi)$ to $T(\psi)$. It is obvious that $T$ preserves the compositions and identities.
Thus, $T$ is a 2-functor from \TTHL to \LTTS.

Next we   construct a 2-functor $S:$\LTTS $\longrightarrow$ \TTHL.
Given a Lie triple $2$-system $L$, we obtain a complex of vector spaces  $\huaT=S(L)$ with $\huaT_0=L_0,\huaT_{-1}=\ker(s)$ and $\dM=t|_{\ker(s)}$.
For all $x_1,x_2,x_3,x_4,x_5\in \huaT_0=L_0$ and $f,g,h\in \huaT_{-1}\subseteq L_1$,
we define  $[\cdot,\cdot,\cdot]$ and $J$ as follows:
\begin{itemize}
\item[\rm(i)~]$[x_1,x_2,x_3]=\{1_{x_1},1_{x_2},1_{x_3}\},$
\item[\rm(ii)~]$[x_1,x_2,h]=\{1_{x_1},1_{x_2},h\},~[x_1,h,x_2]=\{1_{x_1},h,1_{x_2}\},$
\item[\rm(iii)]$[x_1,f,g]=0,~[f,g,x_1]=0,~[f,g,h]=0,$
\item[\rm(iv)]$J(x_1,x_2,x_3,x_4,x_5)=\huaJ_{x_1,x_2,x_3,x_4,x_5}-1_{s(\huaJ_{x_1,x_2,x_3,x_4,x_5})}.$
\end{itemize}

The various conditions of $L$ being a Lie triple $2$-system imply that $\huaT=(\huaT_0,\huaT_{-1},\dM,[\cdot,\cdot,\cdot],J)$ is $2$-term homotopy Lie triple system. This completes the construction of a $2$-term homotopy Lie triple system $\huaT=S(L)$ from a Lie triple $2$-system $L$.

Let $L$ and $L'$ be Lie triple $2$-systems, and $F=(F_0,F_1,F_2):L\longrightarrow L'$ a homomorphism. Let $S(L)=\huaT$ and $S(L')=\huaT'$. We go on to  construct a homotopy Lie triple system homomorphism $\phi=S(F):\huaT\longrightarrow \huaT'$.
Let $\phi_0=F_0$, $\phi_1=F_1|_{\ker(s)}$. Define $\phi_2:\huaT_0\times \huaT_0\times \huaT_0\longrightarrow \huaT'_{-1}$ by
$$\phi_2(x_1,x_2,x_3)=F_2(x_1,x_2,x_3)-1_{s(F_2(x_1,x_2,x_3))}.$$
Then $\phi_2$ satisfies
\begin{eqnarray*}
  \phi_2(x_1,x_1,x_2)&=&0,\\
  \phi_2(x_1,x_2,x_3)&=&-\phi_2(x_2,x_3,x_1)-\phi_2(x_3,x_1,x_2),
\end{eqnarray*}
and
\begin{eqnarray*}
\dM' \phi_2(x_1,x_2,x_3)&=&(t'-s')F_2(x_1,x_2,x_3)\\
&=&\phi_0([x_1,x_2,x_3])-[\phi_0(x_1),\phi_0(x_2),\phi_0(x_3)]'.
\end{eqnarray*}
 The naturality of $F_2$ gives  \eqref{eq:homo4} and \eqref{eq:homo5} in Definition \ref{defi:Lwuqiong hom}, and the fundamentor identity gives  \eqref{eq:homo6} in Definition \ref{defi:Lwuqiong hom}. Thus, $\phi=S(F)$ is a homomorphism between 2-term homotopy Lie triple systems.

Let $F,G:L\longrightarrow L'$ be Lie triple $2$-system homomorphisms and  $\theta:F\Rightarrow G$  a $2$-homomorphism.
Let $\varphi=S(F),\psi=S(G) :\huaT \longrightarrow \huaT'$ be the corresponding homotopy Lie triple system homomorphisms. We define
$$S(\theta)(x)=\theta(x)-1_{s'(\theta(x))}.$$
 By the commutative diagram in Definition \ref{defi:liet2homo}, we can deduce that \eqref{2homoinfinity} holds. Thus, $S(\theta)$  is a $2$-homomorphism. It is straightforward to deduce that $S$ preserves the compositions and identities. Thus  $S$ is a 2-functor from \LTTS to \TTHL.

Now, we construct natural isomorphisms $\alpha:ST\Rightarrow 1_{\rm\bf LieTri2Sys}$ and $\beta:TS\Rightarrow 1_{\rm\bf 2TermHomLTS}$. To construct $\alpha$, consider the $2$-term chain complex of $S(L)$
$$\xymatrix{ \ker(s) \ar[rr]^{t|_{\ker(s)}} && L_0}.$$
Applying $T$ to this result, we obtain a $2$-vector space $L'$ with the space $L_0$ of objects and the space $L_0\oplus\ker(s)$ of morphisms. The source map for this $2$-vector space is given by $s'(x+f)=x$, the target map is given by $t'(x+f)=x+t(f)$. We thus can define an isomorphism $\alpha_L:L'\longrightarrow L$ by setting
\begin{eqnarray*}
  (\alpha_L)_0(x)&=&x,\\
  (\alpha_L)_1(x+f)&=&i(x)+f.
\end{eqnarray*}
It is easy to check that $\alpha_L$ is a linear functor. It is an isomorphism thanks to the fact that every morphism in $L$ can be uniquely written as $i(x)+f$ where $x$ is an object and $f\in \ker(s)$.

To construct $\beta$, consider a $2$-term chain complex $\huaT$ given by
$$\xymatrix{ \huaT_{-1} \ar[rr]^{d} && \huaT_0}.$$
Then $T(\huaT)$ is the $2$-vector space with the space $\huaT_0$ of objects, the space $\huaT_0\oplus \huaT_{-1}$ of morphisms, together with the source and target maps $s(x+f)=x$ and $t(x+f)=x+\dM f$. Applying the functor $S$ to this $2$-vector space we obtain a $2$-term chain complex $\huaT'$ given by
$$\xymatrix{ \ker(s) \ar[rr]^{t|_{\ker(s)}} && \huaT_0}.$$
Since $\ker(s)=\{x+f|x=0\}\subseteq \huaT_0\oplus \huaT_{-1}$, there is an obvious isomorphism $\ker(s)\cong \huaT_{-1}$. Using this we obtain an isomorphism $\beta_\huaT:\huaT'\longrightarrow \huaT$ given by
$$\xymatrix{
 \ker(s) \ar[rr]^{t|_{\ker(s)}}\ar[d]_{\cong} && \huaT_0 \ar[d]^{1} \\
 \huaT_{-1} \ar[rr]^{d} && \huaT_0
}$$
where the square commutes because of how we have defined $t$. It is easy to verify that $\alpha$ and $\beta$ are indeed natural isomorphisms. We omit details. The proof is completed.

\end{proof}

\section{Skeletal and strict Lie triple $2$-systems}\label{sec:ske}

By Theorem \ref{thm:equivalence}, we see that Lie triple $2$-systems and $2$-term homotopy Lie triple systems are equivalent. Thus, we will call a $2$-term homotopy Lie triple system a Lie triple $2$-system in the sequel.

A Lie triple $2$-system $(\huaT_0,\huaT_{-1},\dM,[\cdot,\cdot,\cdot],J)
$ is called {\bf skeletal} ({\bf strict})  if $\dM=0$ ($J=0$).

In this section, first we classify skeletal Lie triple $2$-systems via the third cohomology group. Then, we introduce the notion of a crossed module of Lie triple systems, and show that they are equivalent to strict Lie triple $2$-systems. 

\begin{thm}
There is a one-to-one correspondence between skeletal Lie triple $2$-systems and quadruples $((\g,[\cdot,\cdot,\cdot]_\g),V,\theta,\omega)$, where $(\g,[\cdot,\cdot,\cdot]_\g)$ is a Lie triple system, $V$ is a vector space, $\theta$ is a representation of $\g$ on $V,$ and $\omega$ is a $3$-cocycle on $\g$ with values in $V$.
\end{thm}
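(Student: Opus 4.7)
The plan is to unwind the skeletal assumption $\dM=0$ in Definition \ref{2termHomLTS} and read off each required piece of data. Given a skeletal Lie triple $2$-system $(\huaT_0,\huaT_{-1},0,[\cdot,\cdot,\cdot],J)$, I would set $\g:=\huaT_0$ with ternary bracket $[\cdot,\cdot,\cdot]_\g$ equal to the restriction of $[\cdot,\cdot,\cdot]$ to $\huaT_0^{\otimes 3}$, set $V:=\huaT_{-1}$, define $\theta:\otimes^2\g\to\gl(V)$ by $\theta(x,y)f:=[x,y,f]$, and set $\omega:=J:\g^{\otimes 5}\to V$. Conditions (c) and (d) immediately give skew-symmetry and the ternary Jacobi-like identity for $(\g,[\cdot,\cdot,\cdot]_\g)$, while condition (h) with $\dM=0$ forces the fundamental identity \eqref{lts3}; so $\g$ is a Lie triple system.

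Next I would verify that $\theta$ is a representation. With $\dM=0$, conditions (i) and (j) become, respectively, $[f,x_2,[x_3,x_4,x_5]]=[[f,x_2,x_3],x_4,x_5]+[x_3,[f,x_2,x_4],x_5]+[x_3,x_4,[f,x_2,x_5]]$ and the analogous identity in the second slot; rewriting brackets of the form $[x,y,f]$ as $\theta(x,y)f$ and using the skew-symmetry from (c) yields exactly the two representation axioms for $(V;\theta)$. Condition (k) turns out to be equivalent to (i)+(j) under these skew-symmetries (and the rewrite of $[\cdot,f,\cdot]$ using (c)). Similarly, (f) and (g) are precisely the cochain conditions \eqref{cohomo-1} and \eqref{cohomo-2} for $\omega\in C^3_{\LTS}(\g;V)$.

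The main obstacle is showing that the coherence condition (l) is exactly the $3$-cocycle condition $\delta\omega=0$ with respect to the coboundary operator \eqref{eq:drho} applied to the adjoint-like representation. I would translate each term in (l) into the language of $\theta$ using $[x,y,J(\cdots)]=\theta(x,y)\omega(\cdots)$ and compare with the seven summands in \eqref{eq:drho} for $n=3$: the three terms of the form $[x_1,x_2,J(x_3,x_4,x_5,x_6,x_7)]$ etc. on the left of (l) match the $\theta$-terms in the first and fourth lines of \eqref{eq:drho}; the $J(\cdots,[x_i,x_j,x_k],\cdots)$ terms match the $f(\dots,[\frkX_j,\frkX_k],\dots)$ and $f(\dots,[\frkX_j,z]_\g)$ summands (invoking the Leibniz product \eqref{Nam} on $\g\otimes\g$); the remaining $J(\cdots)$ terms account for the derivation-action summands. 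This is a long but purely mechanical identification of $21$ terms with the $21$ terms produced by $\delta\omega(\frkX_1,\frkX_2,\frkX_3,z)$, after which (l) becomes $\delta\omega=0$.

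For the converse, given a quadruple $((\g,[\cdot,\cdot,\cdot]_\g),V,\theta,\omega)$, I would put $\huaT_0:=\g$, $\huaT_{-1}:=V$, $\dM:=0$, define
\begin{equation*}
[x,y,z]:=[x,y,z]_\g,\qquad [x,y,f]:=\theta(x,y)f,\qquad [x,f,y]:=-\theta(y,x)f,\qquad [f,x,y]:=\theta(x,y)f-\theta(y,x)f,
\end{equation*}
extended by zero when two or more arguments lie in $\huaT_{-1}$, and set $J:=\omega$. Axioms (a)--(e) then hold by definition and by the skew-symmetry/cyclic identity of $\g$; (f) and (g) are the cochain conditions on $\omega$; (h)--(k) collapse to the Lie triple system axioms on $\g$ and the two representation axioms of $\theta$; and (l) is $\delta\omega=0$ by the same term-by-term identification as above. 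The two constructions are visibly inverse to one another, which gives the bijection.
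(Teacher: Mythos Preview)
Your overall strategy is the same as the paper's, but there is a genuine error in how you define the representation. You set $\theta(x,y)f=[x,y,f]$, whereas the correct choice (the one the paper makes) is $\theta(x,y)f=[f,x,y]$. This is not a harmless convention: by axiom (c) one has $[x,y,f]=-[y,x,f]$, so your $\theta$ is automatically skew-symmetric in its two $\g$-arguments, hence $\theta(x_2,x_1)-\theta(x_1,x_2)=-2\theta(x_1,x_2)$. Plugging this into the first representation axiom and comparing with what (k) actually yields (namely $[\theta(x_1,x_2),\theta(x_3,x_4)]=\theta([x_1,x_2,x_3],x_4)+\theta(x_3,[x_1,x_2,x_4])$) gives a contradiction unless $3[\theta(\cdot,\cdot),\theta(\cdot,\cdot)]=0$. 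What you have written down is really the operator $D(x,y)=\theta(y,x)-\theta(x,y)$ derived from the genuine representation; it does not satisfy the second representation axiom either, and it carries strictly less information (note that the last line of the coboundary formula \eqref{eq:drho} uses the \emph{un}symmetrized $\rho$, which you cannot recover from $D$).

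The same slip breaks your converse. Your assignments $[x,y,f]=\theta(x,y)f$, $[x,f,y]=-\theta(y,x)f$, $[f,x,y]=\theta(x,y)f-\theta(y,x)f$ do not satisfy axiom (c): $[x,y,f]=-[y,x,f]$ would force $\theta(x,y)=-\theta(y,x)$, which a generic Lie triple system representation does not satisfy, and $[x,f,y]=-[f,x,y]$ fails as well. The paper's choice $[f,x,y]=\theta(x,y)f$, $[x,f,y]=-\theta(x,y)f$, $[x,y,f]=\theta(y,x)f-\theta(x,y)f$ is forced by (c) and (e), and with it (i)--(k) become exactly the two representation identities while (l) becomes $\delta\omega=0$. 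Once you put $f$ in the first slot, the rest of your outline goes through essentially verbatim.
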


\begin{proof}
Let $(\huaT_0,\huaT_{-1},\dM=0,[\cdot,\cdot,\cdot],J)$ be a skeletal Lie triple $2$-systems. By (h) in Definition \ref{2termHomLTS}, we see that $[\cdot,\cdot,\cdot]|_{\huaT_0}$ satisfies the fundamental identity. Thus, $(\huaT_0,[\cdot,\cdot,\cdot]|_{\huaT_0})$ is a Lie triple system.  $[\cdot,\cdot,\cdot]$ also gives rise to a map $\theta:\otimes^2\huaT_0\longrightarrow \huaT_{-1}$ by
 \begin{equation}
 \theta(x_1, x_2)(f)=[f,x_1,x_2].
 \end{equation}
   By (i), (j) and (k) in Definition \ref{2termHomLTS}, we deduce that $\theta$ is a representation of the Lie triple system $(\huaT_0,[\cdot,\cdot,\cdot]|_{\huaT_0})$ on $\huaT_{-1}$. Finally, by (l) in Definition \ref{2termHomLTS}, we get that $J$ is a 3-cocycle.

  Conversely, given a Lie triple system $(\g,[\cdot,\cdot,\cdot]_\g),$ a representation $\theta$ of $\g$ on a  vector space $V,$ and a $3$-cocycle $\omega$ on $\g$ with values in $V$, we define $\huaT_0=\g,$ $\huaT_{-1}=V$,  $\dM=0,$ and $[\cdot,\cdot,\cdot]$, $J$   by
\begin{eqnarray*}
  [x,y,z]&=&[x,y,z]_\g,\\
  {[f,x,y]}&=&\theta(x,y)(f), \\
  {[x,f,y]}&=&-\theta(x,y)(f), \\
  {[x,y,f]}&=&\theta(y,x)(f)-\theta(x,y)(f), \\
  J&=&\omega.
\end{eqnarray*}
 Then it is straightforward to deduce that $(\huaT_0,\huaT_{-1},\dM=0,[\cdot,\cdot,\cdot],J)$ is a skeletal Lie triple $2$-system. We omit details.
\end{proof}

Now we introduce the notion of a crossed module of Lie triple systems and show that they are equivalent to strict Lie triple $2$-systems.
\begin{defi}
A {\bf crossed module of Lie triple systems} is a quadruple  $((\frkg,[\cdot,\cdot,\cdot]_{\frkg}),(\frkh,[\cdot,\cdot,\cdot]_{\frkh}),\mu,\theta)$, where $(\frkg,[\cdot,\cdot,\cdot]_{\frkg})$ and $(\frkh,[\cdot,\cdot,\cdot]_{\frkh})$ are Lie triple systems, $\mu:\frkg\longrightarrow\frkh$  is a homomorphism of Lie triple systems, and   $\theta:\otimes^2\frkh\longrightarrow\Der(\frkg)$ is a representation of $\h$ on $\g$, such that for all $x,y,z\in \frkh,f,g,h\in\frkg,$ the following equalities hold:
\begin{eqnarray}
  \label{eq:cmc1}\mu(\theta(x,y)(f))&=&[\mu(f),x,y]_{\frkh},\\
  \label{eq:cmc2}\theta(\mu(f),\mu(g))(h)&=&[h,f,g]_{\frkg},\\
  \label{eq:cmc3}\mu(\theta(x,\mu(f))(g))&=&[\mu(g),x,\mu(f)]_{\frkh},\\
  \label{eq:cmc4}\mu(\theta(\mu(f),x)(g))&=&[\mu(g),\mu(f),x]_{\frkh}.
\end{eqnarray}
\end{defi}

\begin{rmk}
  The more general notion of crossed modules of $n$-Leibniz algebras had been given in \cite{CasasCMn-Lie}, and the relation with the third cohomology group is established there.
\end{rmk}

\begin{thm}
There is a one-to-one correspondence between strict Lie triple $2$-systems and crossed modules of Lie triple systems.
\end{thm}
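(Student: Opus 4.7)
The plan is to exhibit mutually inverse constructions between the two data sets. Given the equivalence between strict Lie triple $2$-systems and crossed modules of Lie triple systems, the proof will be a detailed check that the four axioms of a crossed module translate into the conditions (a)--(l) of Definition \ref{2termHomLTS} with $J=0$.

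For the \emph{forward direction}, suppose $(\huaT_0,\huaT_{-1},\dM,[\cdot,\cdot,\cdot],J=0)$ is a strict Lie triple $2$-system. I set $\h:=\huaT_0$ with the ternary bracket $[\cdot,\cdot,\cdot]_\h$ obtained by restricting $[\cdot,\cdot,\cdot]$ to $\huaT_0^{\otimes 3}$; conditions (c), (d) and (h) with $J=0$ give that $(\h,[\cdot,\cdot,\cdot]_\h)$ is a Lie triple system. Set $\g:=\huaT_{-1}$, $\mu:=\dM$, and define the action $\theta:\otimes^2\h\lon\End(\g)$ by $\theta(x,y)(f):=[f,x,y]$. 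The Lie triple bracket on $\g$ is defined by
\[
[f,g,h]_\g:=[f,\dM g,\dM h];
\]
condition (b) ensures this is well-defined (equivalently equal to $[\dM f,\dM g,h]$, obtained via condition (e) and Jacobi in $\h$). Then one checks that $\theta(x,y)\in\Der(\g)$ using the $J=0$ versions of (i)--(k); the representation axioms for $\theta$ follow from the same identities rewritten on $\huaT_{-1}$; and the four crossed module compatibilities \eqref{eq:cmc1}--\eqref{eq:cmc4} are immediate from (a) and the definition of $[\cdot,\cdot,\cdot]_\g$ (for example, $\mu(\theta(x,y)(f))=\dM[f,x,y]=[\dM f,x,y]=[\mu(f),x,y]_\h$ by (a) and antisymmetry).

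For the \emph{backward direction}, given a crossed module $((\g,[\cdot,\cdot,\cdot]_\g),(\h,[\cdot,\cdot,\cdot]_\h),\mu,\theta)$, I set $\huaT_0:=\h$, $\huaT_{-1}:=\g$, $\dM:=\mu$, $J:=0$, and
\begin{eqnarray*}
[x,y,z]&:=&[x,y,z]_\h,\\
{[f,x,y]}&:=&\theta(x,y)(f),\\
{[x,f,y]}&:=&-\theta(x,y)(f),\\
{[x,y,f]}&:=&\theta(y,x)(f)-\theta(x,y)(f),
\end{eqnarray*}
for $x,y,z\in\h,\ f\in\g$. The skew-symmetry and Jacobi conditions (c), (d), (e) are then built-in by construction, while (a) reduces to \eqref{eq:cmc1} together with the Jacobi identity of $\h$. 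Condition (b) is where one has to exploit the full crossed module structure: combining \eqref{eq:cmc1} and \eqref{eq:cmc2} with the derivation property of $\theta$ yields the required equality of $-\theta(\mu f,x)(g)$ and $\theta(\mu g,x)(f)$. The fundamental identities (h)--(k) (with $J=0$) follow from the representation axioms for $\theta$ together with \eqref{eq:cmc2}; condition (l) is automatic since $J=0$. Finally (f), (g) are trivially true.

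The two constructions are clearly inverse to each other on the nose: the underlying vector spaces and the differential are preserved, and the formulas for $[\cdot,\cdot,\cdot]$ and $\theta$ are mutually determined. The main obstacle is the bookkeeping required to verify all of (a)--(l): in particular, showing that $(\g,[\cdot,\cdot,\cdot]_\g)$ is a Lie triple system in the forward direction uses a nontrivial interplay of the Jacobi identity of $\h$ with \eqref{eq:cmc2}, while in the backward direction the verification of (b) and of the fundamental identities (h)--(k) requires one to combine the representation axioms, the derivation property of $\theta$, and all four compatibility equations \eqref{eq:cmc1}--\eqref{eq:cmc4} simultaneously. The remaining identities are then routine.
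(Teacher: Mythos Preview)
Your proposal is correct and follows essentially the same approach as the paper: both directions use the identical constructions (in particular $\theta(x,y)(f)=[f,x,y]$, $[f,g,h]_\g=[\dM f,\dM g,h]$, and in the reverse direction the same formulas for $[x,y,f]$, $[x,f,y]$, $[f,x,y]$), and the verifications proceed by matching conditions (a)--(l) against the crossed module axioms. One small slip: the equality $[f,\dM g,\dM h]=[\dM f,\dM g,h]$ comes directly from (b) together with the skew-symmetry (c), not from (e) and the Jacobi identity in $\h$.
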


\begin{proof}
Let $(\huaT_0,\huaT_{-1},\dM,[\cdot,\cdot,\cdot],J=0)$ be a strict Lie triple $2$-system. Define $\frkg=\huaT_{-1},\frkh=\huaT_0,$ and the following two bracket operations on $\frkg$ and $\frkh$:
\begin{eqnarray}
~[f,g,h]_{\frkg}&=&[\dM f,\dM g,h]=[\dM f,g,\dM h]=[f,\dM g,\dM h],\\
~[x,y,z]_{\frkh}&=&[x,y,z].
\end{eqnarray}
It is straightforward to see that both $(\frkg,[\cdot,\cdot,\cdot]_{\frkg})$ and $(\frkh,[\cdot,\cdot,\cdot]_{\frkh})$ are Lie triple systems. Let $\mu=\dM,$ by Condition (a) in Definition \ref{2termHomLTS}, we have
$$\mu[f,g,h]_{\frkg}=\dM [\dM f,\dM g,h]=[\dM f,\dM g,\dM h]=[\mu(f),\mu(g),\mu(h)]_{\frkh},$$
which implies that $\mu$ is a homomorphism of Lie triple systems. Define $\theta:\otimes^2\frkh\longrightarrow\Der(\frkg)$ by
$$\theta(x,y)(f)=[f,x,y].$$
By Conditions (i), (j) and (k) in Definition \ref{2termHomLTS}, we can obtain that $\theta$ is a representation. Furthermore, we have
\begin{eqnarray*}
  \mu(\theta(x,y)(f))&=&\dM (\theta(x,y)(f))=\dM [f,x,y]=[\dM f,x,y]=[\mu(f),x,y]_{\frkh},\\
  \theta(\mu(f),\mu(g))(h)&=&[h,\dM f,\dM g]=[h,f,g]_{\frkg},\\
  \mu(\theta(x,\mu(f))(g))&=&\dM (\theta(x,\mu(f))(g))=\dM [g,x,\mu(f)]=[\dM g,x,\mu(f)]=[\mu(g),x,\mu(f)]_{\frkh},\\
  \mu(\theta(\mu(f),x)(g))&=&\dM (\theta(\mu(f),x)(g))=\dM [g,\mu(f),x]=[\dM g,\mu(f),x]=[\mu(g),\mu(f),x]_{\frkh}.
\end{eqnarray*}
Therefore we obtain a crossed module of Lie triple systems.

Conversely, a crossed module of Lie triple systems gives rise to a strict Lie triple $2$-system, in which $\huaT_{-1}=\frkg,$  $\huaT_0=\frkh$, $\dM=\mu$, and the trilinear map $[\cdot,\cdot,\cdot]$ is given by
\begin{eqnarray*}
  [x,y,z]&=&[x,y,z]_{\frkh},\\
  {[f,x,y]}&=&\theta(x,y)(f),\\
  {[x,f,y]}&=&-\theta(x,y)(f), \\
  {[x,y,f]}&=&\theta(y,x)(f)-\theta(x,y)(f).
\end{eqnarray*}
The crossed module conditions give various conditions for strict Lie triple $2$-systems. We omit details. The proof is completed.
\end{proof}

 \end{document}